\newcommand{\bsigma}{\boldsymbol{\sigma}}
\newcommand{\bQ}{\boldsymbol{Q}}
\newcommand{\bW}{\boldsymbol{W}}
\newcommand{\bA}{\boldsymbol{A}}
\newcommand{\bx}{\mathsf{x}}
\newcommand{\oracle}{\mathsf{oracle}}
\newcommand{\ba}{\boldsymbol{a}}
\newcommand{\by}{\boldsymbol{y}}
\newcommand{\bm}{\boldsymbol{m}}
\newcommand{\blambda}{\boldsymbol{\lambda}}
\newcommand{\brho}{\boldsymbol{\rho}}
\newcommand{\bX}{\boldsymbol{X}}
\newcommand{\argmax}{\operatornamewithlimits{arg\,max}}
\newcommand{\E}{\mathbb E}
\newcommand{\pr}{\mathbb {P}}
\newcommand{\var}{\operatorname{Var}}
\newcommand{\tonghoonnote}[1]{{\color{red}{#1}}}
\newtheorem{theorem}{Theorem}
\newtheorem{lemma}[theorem]{Lemma}
\newtheorem{corollary}[theorem]{Corollary}
\newtheorem{proposition}[theorem]{Proposition}
\theoremstyle{definition}
\newtheorem{definition}{Definition}
\begin{document}

\title{Scheduling using Interactive  Optimization
  Oracles\\ for Constrained Queueing Networks\thanks{A preliminary version of this work is presented as a poster at ACM International Conference 
on Measurement and Modeling of Computer Systems (SIGMETRICS), 2014.}}

\author{
 Tonghoon Suk\thanks{T.\ Suk is with 
H. Milton Stewart School of Industrial \& Systems Engineering,
   Georgia Institute of Technology,
   Atlanta, GA 30332, USA.
   Email: tonghoons@gatech.edu.}
\and
 Jinwoo Shin\thanks{
J. Shin is with School of Electrical Engineering,
   Korea Advanced Institute of Science and Technology,
   Republic of Korea.
   Email: jinwoos@kaist.ac.kr.}
}

\date{}

\maketitle

\begin{abstract}
Ever since Tassiulas and Ephremides (1992) proposed the maximum weight scheduling algorithm of throughput-optimality for constrained queueing networks that arise in the context of communication networks, extensive efforts have been devoted to resolving its most important drawback: high complexity. This paper proposes a generic framework for designing throughput-optimal and low-complexity scheduling algorithms for constrained queueing networks. Under our framework, a scheduling algorithm updates current schedules by interacting with a given oracle system that generates an approximate solution to a related optimization task. One can utilize our framework to design a variety of scheduling algorithms by choosing an oracle system such as random search, Markov chain, belief propagation, and primal-dual methods. The complexity of the resulting scheduling algorithm is determined by the number of operations required for an oracle to process a single query, which is typically small.
We provide sufficient conditions for throughput-optimality of the scheduling algorithm in general constrained queueing network models.
The linear-time algorithm of Tassiulas (1998) and the random access algorithm of Shah and Shin (2012)
correspond to special cases of our framework using random search and Markov chain oracles, respectively.
Our generic framework, however, provides a unified proof with milder assumptions.
\end{abstract}

\section{Introduction}
The dynamic resource allocation problem in modern communication networks such as wireless networks and input queued switches, examples of constrained queueing networks in which only certain sets of queues can be served simultaneously, is often addressed by the maximum-weight scheduling (MWS) algorithm. As it is throughput-optimal, MWS algorithm yields a stable system under all possible loads for which it can be made stable and requires information only about current queue lengths. However, because it requires repeatedly solving computationally hard problems to find ``good'' schedules, the MWS algorithm cannot be implemented in practice.
Therefore, extensive research has proposed throughput-optimal scheduling algorithms with low complexity.
Examples of such algorithms include simpler implementations of the MWS algorithm \cite{T98, GPS03, MSZ06}, greedy algorithms \cite{AOST93, M99, JLS08, LNS09}, and random access algorithms \cite{GM96, GS06, MEO07}.

\subsection{Our Contribution} 
This paper introduces a novel framework for designing low-complexity throughput-optimal scheduling algorithms in constrained queueing networks,
by utilizing iterative optimization methods 
approximating a ``good'' schedule (i.e., a maximum-weight schedule). 
While the standard implementation of the MWS algorithm entails all iterations of such a method at each service time, the scheduling algorithm in our framework entails only one iteration of it at each service time, which means that the computational time required to find a schedule decreases significantly. Furthermore, we show that the scheduling algorithm preserves throughput-optimality. To build our generic framework, 
we view steps of an iterative optimization methods as queries to a black box that we formalize as an interactive oracle system.  The input of the oracle system depends on the current state of network system, and the output consists of a schedule and ``advice'', information used in the next step of the method. We describe four examples of the oracle system: random search (RS), Markov chain Monte Carlo (MCMC), belief propagation (BP), and primal-dual methods (PDM). For instance, for MCMC, the advice given by the oracle consists of the state of the Markov chain and the current schedule.
After formulating an oracle system from any iterative optimization method, 
one can design a throughput-optimal and low-complexity scheduling algorithm 
via interacting with it.

The intuitive reason why one step of an approximation method suffices for throughput-optimality follows. This method seeks a schedule of maximum weight, which is a function of the queue lengths. We construct a weight function such that its value remains constant for long stretches of time.  Therefore, although we only use one step of the method at each service time, the schedule automatically approximates a maximum weight schedule as time passes, which guarantees the throughput-optimality of the algorithm. 
This underlying intuition is similar in spirit to that in \cite{RSS09, SS12}. The main difference is that 
while the authors in \cite{RSS09, SS12} 
force the weight function value ``vary slowly'' in real numbers, we let them ``vary rarely'' in integers. Because we introduce an integer-valued weight function,  we do not need to analyze ``time-varying'' systems, which simplifies the throughput-optimality proof. 
More importantly, our proof is robust in the sense that it is not sensitive to the given oracle systems, underlying network structures, and arrival processes, as explained in Section~\ref{sec:main}. 

Our generic framework overcomes several limitations of previous work.
First, most existing throughput-optimal algorithms \cite{GPS03,MSZ06,RSS09,SS12}
rely on an underlying network structure, and in principle, they are not easily applied to networks with other structures. In addition, 
proving their throughput-optimality requires a unique set of techniques for each algorithm. 
In contrast, our generic framework does not rely on a network structure, and it guarantees 
throughput-optimality 
by only checking simple algebraic conditions.
Furthermore, the authors of \cite{RSS09, SS12}  considered only Bernoulli arrival processes, and their proofs are not easily generalizable to other arrival processes. However, the  algorithm resulting from our framework is throughput-optimal under any arrival processes with bounded second moments.


One way in which our framework can be used is to select a low-complexity, throughput-optimal scheduling algorithm with good delay performance. 
Using our framework, one can establish the throughput optimality of a family of scheduling algorithms 
that interact with optimization methods and measure 
their delay performance through simulation. Therefore, one can test which algorithm works best in practice 
while theoretically guaranteeing throughput-optimality. 

\subsection{Related Work} 
Simpler or distributed implementations of the MWS algorithm have been extensively proposed in the literature. 
Tassiulas \cite{T98} provides the so-called ``pick-and-compare'' algorithm, which is a linear-complexity version of the MWS algorithm but suffers from bad delay performance.
The work in this line also includes
a variant of the MWS algorithm by Giaccone, Prabhakar, and Shah \cite{GPS03} and 
a gossip-based algorithm by Modiano, Shah, and Zussman \cite{MSZ06}. However, these algorithms are specific to certain network models
and still 
require numerous information (or message) exchanges 
for each new scheduling decision. 
Recently, 
even fully distributed random access
algorithms have been shown to achieve desired high performance (i.e., throughput-optimality)
in both wireless interference and buffered circuit switched network models \cite{RSS09, SS12}.
The main intuition underlying these results is that nodes in a network can adjust
their random access parameters dynamically using local information
such as queue lengths so that they can simulate the MWS algorithm asymptotically 
for throughput-optimality. From an optimization point of view, under these algorithms, nodes run a Markov chain Monte Carlo (MCMC) with time-varying
parameters depending on queue lengths. If the parameters change slowly enough, the authors of \cite{SS12} proved that algorithms sample a maximum-weight schedule (for throughput-optimality).
We note that the ``pick-and-compare'' algorithm and
the random access algorithm 
can also be understood
as special cases of algorithms developed under our generic framework using RS and MCMC oracles, respectively, and more details appear in Section \ref{sec:exam}.

Although several greedy algorithms reduce time complexity, they achieve only some fraction of the maximal throughput region.  For example, parallel iterative matching \cite{AOST93} and iSLIP \cite{M99} have been shown to be 50\% throughput optimal \cite{DP00}.  In addition, Kumar et al. \cite{KGL02} and Dimakis and Walrand \cite{DW06} identified sufficient conditions on the network topology for throughput-optimality. Joo et al. \cite{JLS08} and Leconte et al. \cite{LNS09} further analyzed these conditions to obtain fractional throughput results for a class of wireless networks. However, these algorithms are generally not throughput optimal and require multiple rounds of message exchanges between nodes.

\subsection{Organization}
Section \ref{sec:pre} describes the constrained queueing network model of  interest in this study
and the performance metric (i.e., throughput-optimality) for scheduling algorithms.
Section \ref{sec:main} provides the main results of this paper: a generic framework for
designing a throughput-optimal and low-complexity scheduling algorithm that finds its current schedule via interaction with an oracle system. It also states the throughput-optimality proof with an associated key lemma.
Section \ref{sec:exam} introduces several examples of scheduling algorithms under our framework, and Section \ref{sec:pflemmas} presents the formal proof of the key lemma.

\section{Preliminaries}\label{sec:pre}

\subsection{Network Model}\label{subsec:network_model}

The constrained queueing network, a stochastic network system with service-level constraints, consists of many buffers that temporarily store packets (jobs) to be served. Packets arrive at each buffer via an exogenous stochastic process and leave the system after being served. At most one packet in each nonempty buffer can be served at a time, and all packets have a unit service time. However, because of service constraints, not all nonempty buffers can transmit their packets simultaneously, and only certain subsets of the buffers can serve packets at the same time. We call these subsets \emph{schedules}, and every constrained queueing network has its own collection of schedules. At each service epoch, any scheduling algorithm selects a schedule among the collection, and nonempty buffers in the schedule process their packets. Our goal is to design scheduling algorithms that require little computational time to choose a schedule at each service epoch while maintaining high performance. Our performance metric introduced in the next section relates to the number of packets (\emph{queue length}) in each buffer. For the next step, we set up a mathematical model that represents the above network system and describe how the queue length of each buffer changes as time evolves.

Our model is a constrained queueing network with $n$ buffers in time slotted by service epochs (i.e., 
time is denoted by a nonnegative integer variable $t\in\mathbb Z_+:=\{0,1,\dots,\}$), and at each time $t$, a schedule is selected by a scheduling algorithm.
Buffers are indexed by elements in the set $\mathcal{I}$ ($|\mathcal{I}|=n$), and the queue length of buffer $i\in\mathcal{I}$ is denoted by $Q_i(t)$. 
Now, we show how $Q_i(t+1)$ changes from $Q_i(t)$ by arrivals and service. 
During time interval $[t,t+1)$, the queue length of buffer $i$ increases by the number of (external) arrival packets 
at buffer $i$ and decreases by $1$ if a selected schedule (a subset of buffers) at time $t$  contains buffer $i$. For a mathematical illustration of this observation, we denote the number of arrivals to buffer $i$ during $[t,t+1)$ by $A_i(t)$ and
depict a schedule by an $n$-dimensional binary vector $\bsigma=[\sigma_i:i\in \mathcal{I}]$ such that $\sigma_i=1$ if buffer $i$ is in the schedule, and $\sigma_i=0$ otherwise.  We also let $\mathcal{S}\subset \{0,1\}^n$ be the set of all available schedules and $\bsigma(t)\in \mathcal{S}$ the 
schedule during $[t,t+1)$ for $t\in\mathbb Z_+$. Then, the above observation is expressed as 
\begin{equation}\label{eq:dynamics_of_Q}
Q_i(t+1)~=~Q_i(t) + A_i(t) -\sigma_i(t)\,\mathbb{I}_{\{Q_i(t)>0\}},
\end{equation}
where $\mathbb{I}_\mathcal{A}$ is an indicator function of event $\mathcal{A}$. We close this section with key assumptions relating to the external arrivals of packets: $\big\{A_i(t)\in\mathbb Z_+:t\in \mathbb Z_+, \,i\in \mathcal{I}\big\}$ are independent random variables with
$$ \E[A_i(t)]=\lambda_i,\quad \var[A_i(t)]\leq \mu^2,$$
where $\lambda_i\in [0,1]$ is the \emph{arrival rate} for buffer $i$, and $\mu>0$ is {a} positive (finite) constant.

\subsection{Performance Metric}

Our goal is to design high-performance scheduling algorithms that find a schedule $\bsigma(t)\in \mathcal{S}$ at each time $t\in\mathbb Z_+$ in little computational time.
In this paper, a scheduling algorithm has high performance, called \emph{throughput-optimality} if 
it ensures that queues do not blow up as long as the vector of arrival rates is within the system maximal stability region.

To describe it formally, we define the capacity region as follows:
$$\mathcal{C}\,:=\,\left\{\sum_{\bsigma\in S}\alpha_{\bsigma}\,\bsigma\,:\, 
\sum_{\bsigma\in \mathcal{S}}\alpha_{\bsigma} =1\mbox{ and }\alpha_{\bsigma}\geq 0\mbox{ for all } \bsigma\in \mathcal{S}\right\},$$
that is, the convex hull of all available schedules in $\mathcal S$.
The capacity region $\mathcal{C}$ essentially contains all effective service rates induced by any scheduling algorithm.
Therefore, if queues in a system with arrival rate vector $\blambda$ are stable by any scheduling algorithm, 
there exists $\bsigma\in \mathcal{C}$ such that $\blambda\leq \bsigma$ component-wise; we call such $\blambda$ \emph{admissible}. Also, when arrival rate vector $\blambda$ is strictly less than some $\bsigma$ in $\mathcal{C}$, we say $\blambda$ is \emph{strictly} admissible, and the set of all strictly admissible arrival rate vectors is denoted by $\Lambda^o$:
\begin{equation*}
  \Lambda^o~:=~\big\{ \blambda\in\mathbb R_+^n\,:\,\blambda<\bsigma,\mbox{ for some }\bsigma\in\mathcal{C} \big\}.
\end{equation*}
Thus, a throughput-optimal scheduling algorithm is able to make a system \emph{stable} for any arrival rates $\blambda \in \Lambda^o$, which is formally stated as follows.

\begin{definition}\label{def:throughput-optimality}
	A system is \emph{stable} if  
	$$\liminf_{t\to\infty} \sum_{i\in\mathcal{I}}Q_i(t)<\infty\qquad\textrm{with probability $1$},$$i.e., the total queue length remains finite with probability $1$.
	A scheduling algorithm is called {\em throughput-optimal} if
	the system with arrival rates vector $\blambda \in \Lambda^o$ is stable under the scheduling algorithm.
\end{definition}
\noindent
To prove that scheduling algorithms from our framework
are throughput optimal, we first define an appropriate
underlying Markov chain and show that a subset of states with bounded total queue length is positive recurrent utilizing the popular Lyapunov-Foster criteria, which is introduced in the following section.

\subsection{Stability of a System: Lyapunov-Foster Criterion}\label{sec:lyapunov}

This section introduces a method for proving the positive recurrence in a Markov chain and its relation to the stability of a system,
which is proved by the conclusion in Lemma~\ref{thm:Lyapunov}.
We first recall the definition of the positive recurrence in Markov chain $\{\bX(t)\,:\,t\in\mathbb Z_+\}$ on state space $\Omega$. 
A subset $\mathcal{B}\subset \Omega$ is said to be {\em recurrent} if $\inf_{\bx\in \mathcal{B}} \pr \left[ \tau_B<\infty\,|\,\bX(0)=\bx\right]=1$, where $\tau_\mathcal{B}=\inf\{t\geq 1\,|\,\bX(t)\in\mathcal{B}\}$ is a hitting time for $\mathcal{B}$. If $\sup_{\bx\in \mathcal{B}}\E\left[ \tau_\mathcal{B}\,|\,\bX(0)=\bx\right]<\infty$, recurrent subset $\mathcal{B}$ is called {\em positive recurrent}. 
One way to show the positive recurrence is to use the following negative drift condition on a Lyapunov function, also known as the Lyapunov-Foster criterion.

\begin{lemma}[\protect{\cite[Theorem 1]{FK04}}]\label{thm:Lyapunov}
Let $\{\bX(t)\,:\,t\in\mathbb Z_+\}$ be a Markov chain on state space $\Omega$, and
$L:\Omega\to\mathbb R_+$ be a function on $\Omega$ such that
$\sup_{\bx\in\Omega} L(\bx)=\infty$.
For any $\gamma\geq 0$, define
$\mathcal{B}_\gamma=\{\bx\in\Omega \,:\,L(\bx)\leq\gamma\}$.
  Suppose there exist functions $\tau,\kappa:\Omega\to\mathbb R_+$ such that 
  \begin{equation}\label{eq:thm:Lyapunov}
    \E[L(\bX(\tau(\bx)))-L(\bX(0))\,|\,\bX(0)=\bx]\leq -\kappa(\bx),~\forall \bx\in\Omega,c
  \end{equation}
and they satisfy the following conditions:
  \begin{enumerate}
    \item[{\bf L1.}] $\liminf_{L(\bx)\to\infty} \kappa(\bx)>0$.
    \item[{\bf L2.}] $\inf_{\bx\in \Omega} \kappa(\bx)>-\infty$.
    \item[{\bf L3.}] $\sup_{\bx\in B_{\gamma}} \tau(\bx) <\infty$ for all $\gamma\in\mathbb R_+$.
    \item[{\bf L4.}] $\limsup_{L(\bx)\to\infty} \tau(\bx)/\kappa(\bx)<\infty$. 
  \end{enumerate}
  Then, there exists constant $\gamma_0 > 0$ so that for all
$\gamma_0 < \gamma$, the following holds:
\begin{eqnarray*}
\sup_{\bx \in \mathcal B_\gamma} \E\left[ T_{B_\gamma} \,|\,\bX(0)=\bx\right] & < & \infty. 
\end{eqnarray*}
Namely, $\mathcal{B}_\gamma$ is positive recurrent. 
\end{lemma}
\noindent The above function $L$ is called a \emph{Lyapunov function}.
To show a system is stable, we construct an underlying network Markov chain and define a Lyapunov function that depends on queue lengths and  goes to infinity as total queue length goes to infinity. If $\mathcal{B}_\gamma$ is positive recurrent for any $\gamma>\gamma_0$, then the system is stable by the following argument:
Let the initial state be $\bx\in B_\gamma$ for some $\gamma>\gamma_0$. 
Since $B_\gamma$ is positive recurrent, the Markov chain hits $B_\gamma$ infinitely often with probability $1$, which implies that the system is stable because the total queue length of any state in $B_\gamma$ is bounded. Therefore, to guarantee throughput-optimality of our scheduling algorithm, for any arrival rate vector $\blambda \in \Lambda^o$, we need to find $\tau$ and $\kappa$, which satisfies \eqref{eq:thm:Lyapunov} and conditions {\bf L1}--{\bf L4}.


\section{Scheduling using Interactive Oracles}\label{sec:main}

This section presents our main results, a general framework for designing low-complexity scheduling algorithms for constrained queueing networks and the sufficient conditions for throughput-optimality of the algorithms. As introduced in Section~\ref{subsec:network_model}, a constrained queueing network is represented by $(\mathcal{I},\mathcal{S})$: $\mathcal{I}$ is an index set for buffers ($|\mathcal{I}|=n$), and $\mathcal{S}$ is the set of all schedules that are $n$ dimensional binary vectors. For such system, a well-known throughput-optimal scheduling algorithm is the maximum-weight scheduling (MWS) algorithm \cite{TE92}, which selects a solution (schedule) to the following optimization problem:
\begin{equation} 
  \max \left\{ \brho\cdot\bW:=\sum_{i\in \mathcal{I}} \rho_i W_i\,:\,\brho\in \mathcal{S}\right\},\label{eq:max_problem}
\end{equation}
where $\bW$ is an $n$-dimensional vector called a \emph{weight vector} and $\brho\cdot\bW$ is called the \emph{weight of schedule $\brho$}. {Namely, the optimization problem \eqref{eq:max_problem} finds a maximum-weight schedule in $\mathcal{S}$.}
Weight vector $\bW$ depends on queue length vector $\bQ(t):=[Q_i(t)\,:\,i\in\mathcal{I}]$ and at every service epoch, the MWS algorithm requires solving the above optimization problem. For weight vector $\bW$, a solution to optimization problem~\eqref{eq:max_problem} can be obtained by various methods according to the structure of network system $(\mathcal{I},\mathcal{S})$. Such a method usually consists of many steps (iterations) that induce a long computation time at each service epoch in the MWS algorithm. As the MWS algorithm, the scheduling algorithm in our framework utilizes an iterative method for optimization problem~\eqref{eq:max_problem}, but uses only one step per a service epoch instead of all steps in the method. Thus, the algorithm takes little computational time to find a schedule {at each service epoch}. In addition, proper choices of weight vector $\bW$ at each service epoch guarantees the throughput-optimality of the algorithm.
 In the remainder of this section, we describe the algorithm in detail: Section~\ref{sec:oracle_system} introduces a general (abstract) concept of one step (iteration) of the method that solves problem~\eqref{eq:max_problem}, Section~\ref{sec:algorithm_description} describes our scheduling algorithm and conditions that guarantee the throughput-optimality of the algorithm, and Section~\ref{sec:pfmainthm} presents 
 the proof outline of our main theorem.

\subsection{Oracle System}\label{sec:oracle_system}
To develop throughput-optimal, low-complexity scheduling algorithms for a constrained queueing network represented by $(\mathcal{I},\mathcal{S})$, this paper proposes an algorithm that finds a schedule in $S$ at each service epoch by utilizing a black box called an \emph{oracle system}. The oracle system is motivated by one iteration in (randomized or deterministic) iterative methods for finding an (approximate) optimal solution to optimization problem~\eqref{eq:max_problem}. Typically, at every iteration, an iterative method updates its current solution (schedule) using information from the previous iteration (and weight vector $\bW$); we refer to such information transmitted between two consecutive iterations \emph{advice}. Thus, an iterative method can be understood as a process interacting with a black box that receives advice as an input and outputs an updated schedule and new advice used in the next iteration; that is, the iterative method maintains advice (and a weight vector), and at each iteration, it sends current advice to the black box and replaces the current advice and the current schedule with outputs from the black box. We introduce a generalized definition of the black box in an iterative method, the oracle system, which has the following input and output:
\begin{itemize}
	\item[$\circ$] The oracle system receives advice $\ba$ and weight vector $\bW=[W_i\in \mathbb Z_+:i\in \mathcal{I}]$ as inputs,
	\item[$\circ$] The oracle system outputs (or returns) schedule $\bsigma\in\mathcal{S}$ and updated advice $\widehat{\ba}$.
\end{itemize}
We denote the set of all advice by $\mathcal {A}$. 
Since the oracle system is similar to one step (iteration) in an iterative method that finds an (approximate) solution to optimization problem~\eqref{eq:max_problem}, when we consecutively interact with the oracle system while fixing a weight vector, we obtain an approximate solution. To state this argument formally, when the oracle system takes advice $\ba$ and weight vector $\bW$ as inputs, 
we denote outputs by $\bsigma =\bsigma_{\oracle}(\ba)= \bsigma_{\oracle}(\bW,\ba)$ and $\widehat{\ba}=\ba_{\oracle}(\ba)=\ba_{\oracle}(\bW,\ba)$, where the oracle can generate random outputs in general. 
Then, we assume that the oracle system satisfies the following condition:
\begin{enumerate}
  \item[{\bf C0.}] For any $\eta,\delta\in(0,1)$, if $W_{\max}:=\max_{i\in \mathcal{I}} W_i$ is large enough,
there exists $h=h(W_{\max},\eta,\delta)$ such that for any $t\geq h$ and advice $\ba\in\mathcal A$,
  	\begin{align*}
\left( \bsigma_{\oracle}(\ba_{\oracle}^{(t)}(\ba))\right)
  	\cdot \bW
  	 &~\geq~(1-\eta) \max_{\brho\in S}
  	\brho\cdot \bW
\qquad\mbox{with probability at least $ 1-\delta$},
  	\end{align*}
  	where $\ba_{\oracle}^{(t)}$ is the function composing $\ba_{\oracle}$ ``$t$ times'' (i.e., $\ba_{\oracle}^{(t)} = \ba_{\oracle}^{(t-1)}\circ \ba_{\oracle}$).
\end{enumerate}
Condition {\bf C0} implies that  
{after $h$ interactions, the oracle system generates schedule $\bsigma$ that is an approximate solution to \eqref{eq:max_problem}.}

\subsection{Scheduling Algorithm}\label{sec:algorithm_description}
This section describes how our scheduling algorithm interacts with an oracle system that corresponds to one step (iteration) in an iterative method for  optimization problem~\eqref{eq:max_problem}. The oracle system receives advice and a weight vector as inputs. Our scheduling algorithm maintains advice $\ba(t)$ and weight vector $\bW(t)$
along with queue length vector $\bQ(t)$. 
Then, at service epoch $t$, current advice $\ba(t)$ and current weight vector $\bW(t)$ are sent to the oracle system, which returns updated advice $\ba(t+1)$ and schedule $\bsigma(t)$. 
Then, schedule $\bsigma(t)$ {and}  arrival vector $\bA(t):=[A_i(t)\,:\,i\in\mathcal{I}]$ during $[t,t+1)$ determine queue length vector $\bQ(t+1)$ at time $t+1$ by \eqref{eq:dynamics_of_Q}. 
Therefore, the time-complexity of the scheduling algorithm is precisely depends on
how long the oracle system takes to process a query (i.e., the time-complexity of one step of an iterative algorithm), which is typically very small, as we see examples in Section \ref{sec:exam}.
That is, the algorithm has low complexity. In addition, throughput-optimality is achieved by a proper choice of weight vector $\bW(t)$ as a function of queue length vector $\bQ(t)$. We ensure that when $\bQ(t)$ is large, $\bW(t)$ does not change for sufficient amount of time {so that} the oracle system returns a maximum-weight schedule with respect to $\bW(t)$. This guarantees that our scheduling algorithms are throughput optimal.

\begin{figure}[ht!]
\centering
\includegraphics[width=3in]{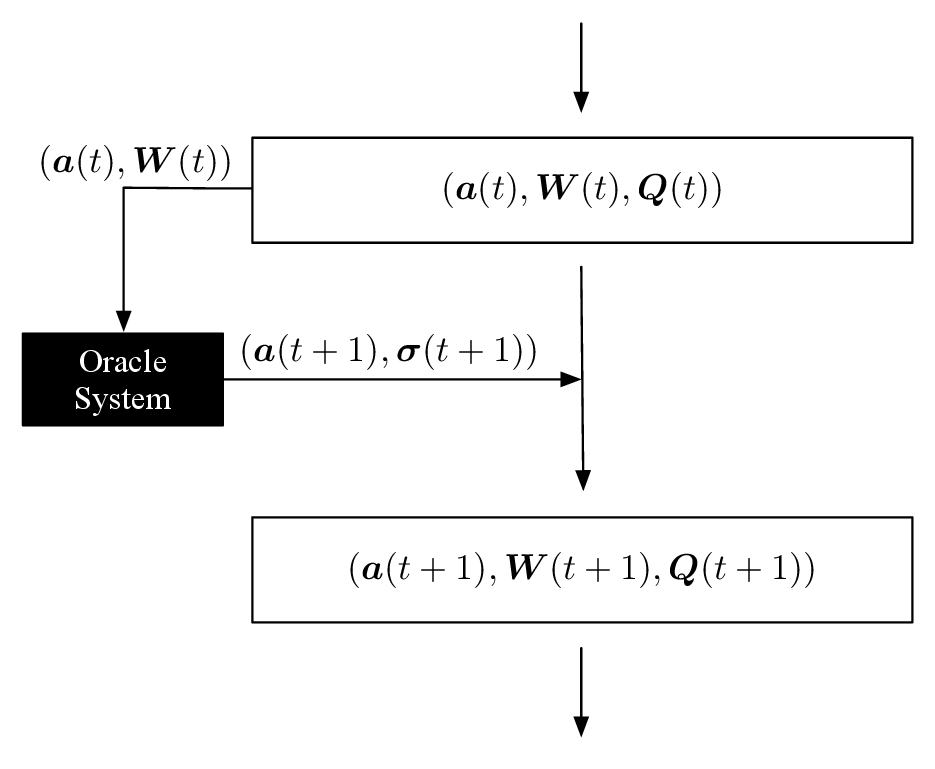}
\caption{\small Scheduling with an interactive oracle system. At each service epoch,
a query consisting of current advice $\ba(t)$ and weight $\bW(t)$ is sent to the oracle system. The oracle returns {updated advice $\ba(t+1)$ and 
schedule $\bsigma(t+1)$}.
}\label{fig:oracle}
\end{figure}

Next, we explain how to define $\bW(t)$. For each $i\in\mathcal{I}$, we let $W_i(t)$ be an integers in the interval $[U_i(t)-2, U_i(t)+2]$ for $i\in[n]$, where 
$$U_i(t):=\max\{f(Q_i(t)),g(Q_{\max}(t))\}$$
for positive real-valued functions $f,g:\mathbb R_+\to \mathbb R_+$ and $Q_{\max}(t)=\max_{i\in \mathcal{I}} Q_i(t)$.
At $t=0$, we define $W_i(0)$ be the closest integer to $U_i(0)$ and renew $W_i(t+1)$ for $t\geq 0$ as follows: For $i\in\mathcal{I}$ such that the distance between previous weight $W_i(t)$ and $U_i(t+1)$ is at least $2$, $W_i(t+1)$ becomes the closest integer to $U_i(t+1)$, and $W_i(t+1)$ is the same as $W_i(t)$ for the other $i$'s. The following is a formal description of the procedure at each service time.

\vspace{0.2cm}

\noindent\rule{\linewidth}{0.4mm}
\begin{itemize}
  \item[$\circ$] $\bsigma(t+1) = \bsigma_{\oracle}(\bW(t),\ba(t))$,
	\item[$\circ$] $\ba(t+1) = \ba_{\oracle}(\bW(t),\ba(t))$,
	\item[$\circ$] $W_i(t+1)$ is the closest integer to $U_i(t+1)$ if $$|W_i(t)-U_i(t+1)|> 2,$$ and 
	$W_i(t+1)=W_i(t)$ otherwise.
\end{itemize}
\noindent\rule{\linewidth}{0.4mm}
\vspace{0.1cm}

{Now, we are ready to state our main theorem, which introduces the sufficient condition for functions $f$ and $g$ to guarantee throughput-optimality of the algorithms.} 

\begin{theorem}\label{thm:mainResult} 
The above scheduling algorithm is throughput-optimal
if functions $f,g,h$ satisfy 
condition {\bf C0} in addition to the following conditions:
	\begin{enumerate}
    \item[\bf C1.] $f$ and $g$ are increasing, differentiable, and concave. 
    \item[\bf C2.] $\lim_{x\to\infty} \frac{g(x)}{f(x)} = 0$,  and $\lim_{x\to\infty} g(x) = \infty$. 
    \item[\bf C3.] $f(0)=0$.
    \item[\bf C4.] $\lim_{x\to\infty} f'(x)=\lim_{n\to\infty} g'(x)=0$.
    \item[\bf C5.] For any fixed $\eta,\delta>0$,
    \begin{equation*}
      \lim_{x\to\infty} \frac{h(f(x),\eta,\delta)}{x} =0.
    \end{equation*}
    \item[\bf C6.] There exists $c\in(0,1)$ such that for any fixed $\eta,\delta>0$, 
    \begin{eqnarray*}
      && \lim_{x\to\infty} f^{\prime}\left(f^{-1}\left(g\left((1-c)x\right)\right)\right) h\left(f((1+c)x),\eta,\delta\right)=0.
    \end{eqnarray*}
  \end{enumerate}
\end{theorem}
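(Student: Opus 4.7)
The plan is to verify the Lyapunov--Foster conditions of Theorem~\ref{thm:Lyapunov} for the Markov chain $\{\bX(t)\}$, using the Lyapunov function
\[
L(\bX) \;=\; \sum_{i\in V} F(Q_i),\qquad F(x) \;=\; \int_0^x f(s)\,\mathrm{d}s,
\]
which is nonnegative, unbounded, and coordinate-wise increasing by C1 and C3. Sub-level sets $B_\gamma$ are finite: if $\bQ$ is bounded then $\bsigma\in S$ and $\bW$ (within $\pm 2$ of $\bU$) range over finite sets, and $\ba\in\mathbb{A}(\bW)$ is finite by assumption~A2.

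The structural heart of the argument is that $\bW(t)$ is \emph{rarely varying}. Concavity and C4 give $\max\{f'(q),g'(q)\}\to 0$ as $q\to\infty$, and a single step moves each $Q_i$ by at most one, so $|U_i(t+1)-U_i(t)|\le \max\{f'(Q_{\max}(t)),g'(Q_{\max}(t))\}$. Combined with the integer update rule $|W_i-U_i|\ge 2$, starting from a state $\bx$ with $Q_{\max}(\bx)$ large forces $\bW$ to remain frozen on a window of length
\[
\tau(\bx) \;\gtrsim\; \Bigl[\,\max\{f'(Q_{\max}(\bx)),g'(Q_{\max}(\bx))\}\,\Bigr]^{-1},
\]
which tends to infinity. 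On any such window, A1 guarantees that, after $h(W_{\max},\eta,\delta)$ oracle queries, the returned $\bsigma(t)$ is a $(1-\eta)$-approximate maximum-weight schedule for the frozen $\bW$, with failure probability at most $\delta$. Conditions C5 and C6 are calibrated exactly so that (i) $\tau(\bx)\gg h$, and (ii) the worst-case Lyapunov damage accumulated during the $h$ warm-up steps is $o(\tau(\bx)\,f(Q_{\max}(\bx)))$ relative to the negative drift in the remaining $\tau(\bx)-h$ near-MW steps.

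For the drift computation, a Taylor expansion of $F$ with $F'=f$ together with the $O(1)$ per-step queue increments yields
\begin{align*}
&\E[L(\bX(\tau(\bx)))-L(\bX(0))\mid\bX(0)=\bx]\\
&\qquad=\;\sum_{t=0}^{\tau(\bx)-1}\E\!\left[\sum_{i\in V} f(Q_i(t))\bigl(\lambda_i-\sigma_i(t)I_{\{Q_i(t)>0\}}\bigr)\right] + E(\bx),
\end{align*}
with error $E(\bx)$ of lower order by C4 and an Azuma estimate $|Q_i(t)-Q_i(0)|=O(\sqrt{t})$ w.h.p.\ on the window. Since $|W_i-\max\{f(Q_i),g(Q_{\max})\}|\le 2$ and $g(Q_{\max})/f(Q_{\max})\to 0$ by C2, the A1 guarantee translates to $\bsigma(t)\cdot [f(Q_i(t))]\ge (1-\eta)\max_{\brho\in S}\brho\cdot [f(Q_i(t))]-o(f(Q_{\max}(\bx)))$ throughout the bulk of the window. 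For $\blambda\in\Lambda^o$ there is $\bsigma^*\in\mathcal{C}$ and $\varepsilon>0$ with $\blambda+\varepsilon\mathbf{1}\le\bsigma^*$, so $\sum_i\lambda_i f(Q_i(t)) \le \max_{\brho\in S}\brho\cdot[f(Q_i(t))]-\varepsilon f(Q_{\max}(\bx))$; taking $\eta$ small then gives $\E[\Delta L\mid \bx]\le -\kappa(\bx)$ with $\kappa(\bx)\gtrsim \varepsilon\,\tau(\bx)\,f(Q_{\max}(\bx))$. Hypotheses L1--L4 follow from the explicit forms of $\tau,\kappa$ together with C1 and C2.

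The main technical obstacle is the coupling between the moving $\bQ(t)$ and the frozen $\bW$ within an epoch: one must show that the near-MW certificate issued by A1 at the start of the epoch persists against the random fluctuation of $\bQ(t)$, so that $\bsigma(t)$ remains near-optimal with respect to the actual time-$t$ weights $[f(Q_i(t))]$. This is exactly where C6 is used quantitatively --- it ensures that the product of the warm-up length $h(f((1+c)Q_{\max}),\eta,\delta)$ and the sensitivity $f'(f^{-1}(g((1-c)Q_{\max})))$ vanishes, which is what allows the static ``analysis when algorithm parameters are fixed'' strategy advertised in the introduction to avoid any time-varying Markov-chain machinery.
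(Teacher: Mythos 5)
Your overall strategy coincides with the paper's: the same Lyapunov function $L=\sum_i F(Q_i)$ with $F'=f$, the same ``rarely varying integer weights'' mechanism to freeze $\bW$ over a long window, assumption A1 plus a Markov-inequality argument to get near-max-weight schedules for most of the window, and the translation back from $\bW$ to $f(\bQ(t))$ via $|W_i-U_i|\le 2$ and C2. One step as you state it, however, would fail. You claim $|U_i(t+1)-U_i(t)|\le\max\{f'(Q_{\max}(t)),g'(Q_{\max}(t))\}$ and hence take the freeze window of length $\tau(\bx)\gtrsim\bigl[\max\{f'(Q_{\max}),g'(Q_{\max})\}\bigr]^{-1}$. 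But $U_i=\max\{f(Q_i),g(Q_{\max})\}$ depends on $Q_i$, not only on $Q_{\max}$: when the $f(Q_i)$ branch governs, all you know is $f(Q_i)\ge g(Q_{\max})$, i.e.\ $Q_i\ge f^{-1}(g(Q_{\max}))$, so the correct per-step sensitivity is $f'\bigl(f^{-1}(g((1-c)Q_{\max}))\bigr)$, which is typically much larger than $f'(Q_{\max})$. (Take $f(x)=\sqrt{x}$, $g(x)=x^{1/4}$: your window has length $\sim Q_{\max}^{1/2}$, but a queue with $Q_i\approx Q_{\max}^{1/2}$ has $f'(Q_i)\sim Q_{\max}^{-1/4}$, so $U_i$ can drift by $\sim Q_{\max}^{1/4}\to\infty$ over your window and $W_i$ changes many times, destroying the frozen-weight argument.) This is exactly why $g$ appears in the algorithm at all --- it floors the weights so that the governing derivative is evaluated no lower than $f^{-1}(g(Q_{\max}))$ --- and why the paper sets $\tau(\bx)=\min\bigl\{\lfloor 1/f'(f^{-1}(g((1-c)Q_{\max})))\rfloor,\lfloor cQ_{\max}\rfloor\bigr\}$, with the second cap needed so that $Q_{\max}(t)$ stays within a factor $(1\pm c)$ (which your drift bound $-\varepsilon\tau f(Q_{\max})$ also implicitly requires). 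You do invoke the correct sensitivity $f'(f^{-1}(g((1-c)Q_{\max})))$ when discussing C6, so the fix is local, but as written the window-length claim is wrong. Two smaller remarks: the paper allows $\bW$ to change up to $n$ times (once per queue) in the window rather than not at all, applying A1 on each subinterval and union-bounding; and no Azuma estimate is needed --- the deterministic bound $|Q_i(t+1)-Q_i(t)|\le 1$ together with concavity gives $L(t+1)-L(t)\le\bA(t)\cdot f(\bQ(t))-\bsigma(t)\cdot f(\bQ(t))+nf'(0)$ directly.
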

\noindent
We {provide} some intuitions underlying the above conditions.
Conditions {\bf C1}, {\bf C3} and {\bf C4} are technical conditions that make
our analysis using a Lyapunov function easier. 
{Condition {\bf C2} implies that $f$ should grow faster than $g$.
Therefore, weight $W_i(t)\approx U_i(t)=\max\{f(Q_i(t)),g(Q_{\max}(t))\}$ is determined by $f$ and $g$ 
for large and small queue $Q_i(t)$, respectively.}
To establish throughput-optimality,
we prove that
if the maximum queue length $Q_{\max}(t)$ is large,
weight function $W_i(t)$ 
remains constant for long enough stretches of time
so that
the interactive oracle produces an approximation solution of \eqref{eq:max_problem}, i.e.,
achieves the maximum weight schedule.
To this end, we need the property that $U_i(t)$ changes slowly,
where conditions {\bf C5} and {\bf C6} ensure it for maximum and non-maximum queues, respectively,
as explained in what follows.
From Condition {\bf C5}, 
$f$ should grow slowly with respect to $h$, i.e.,
$U_i(t)=f(Q_{\max}(t))$ for maximum queues change slowly.
The change of $U_i(t)$ for other non-maximum queues is
larger than that for maximum queues, but
the term $f^{\prime}\left(f^{-1}\left(g\left((1-c)x\right)\right)\right)$ in condition {\bf C6}
will be used to
bound the change of $U_i(t)$ for non-maximum queues.
Namely, 
condition {\bf C6} is necessary to guarantee that
$U_i(t)$ for non-maximum queues changes slowly with respect to $h$.
Note that due to condition {\bf C6}, $g$ should grow ``not too slowly''.

Our proof formalizes the above intuitions.
The proof outline of the above theorem is presented in the following section, 
and detailed proofs of key lemmas are given in Section \ref{sec:pflemmas}.
In Section \ref{sec:exam}, we present several specific examples of throughput-optimal and low-complexity scheduling
algorithms under Theorem \ref{thm:mainResult}. 

\subsection{Proof Outline of Theorem \ref{thm:mainResult}}\label{sec:pfmainthm}

We will utilize Lemma \ref{thm:Lyapunov} to show the desired throughput-optimality. 
To this end, we first define a {Markov chain describing the evolution of the network system}.
Under our scheduling algorithm, at time $t$, we retain advice $\ba(t)$, weight vector $\bW(t)$, and queue length vector $\bQ(t)$, all of which depend  on only the previous ones: $\ba(t-1)$, $\bW(t-1)$, and $\bQ(t-1)$. Therefore, $$\{\bX(t):=(\ba(t), \bW(t), \bQ(t))\,:\,t\in\mathbb Z_+\}$$ is a Markov chain on the state space
{
\begin{equation*}
  \Omega:=\bigg\{ (\ba,\bW,\bQ)\in\mathcal{A} \times \mathbb Z_+^n \times \mathbb Z_+^n    \,:\, 
  |W_i-U_i| \leq 2, 
   \textrm{where } U_i=\max\{f(Q_i),g(Q_{\max})\}
  \bigg\}.
\end{equation*} 
}
For $\bx=(\ba,\bW,\bQ)\in\Omega$, we consider the following Lyapunov function:
\begin{equation*}
	L(\bx)~:=~\sum_{i=1}^n \int_0^{Q_i}f(s)d s.
\end{equation*}
Since $\lim_{x\to\infty} f(x)=\infty$ (i.e., condition {\bf C2}), we have that $\sup_{\bx\in\Omega}L(\bx)=\infty$
and $L$ is bounded if and only if queue lengths are bounded. 
Therefore, 
the positive recurrence of $\mathcal{B}_\gamma=\{\bx\in\Omega \,:\,L(\bx)\leq\gamma\}$ for large enough $\gamma$ guarantees the stability of the system, i.e., 
queue lengths remain finite with probability 1.

To establish the positive recurrence of $\mathcal{B}_\gamma$, we define functions $\tau,\kappa:\Omega\to\mathbb R_+$ that satisfy \eqref{eq:thm:Lyapunov} and conditions {\bf L1}--{\bf L4} in Lemma~\ref{thm:Lyapunov} when $\blambda\in\Lambda^o$.
First, observe that for any $\blambda \in \Lambda^o$,
there exists $\varepsilon>0$ and $[\alpha_{\brho}:\brho\in S]\in [0,1]^{|S|}$ so that
\begin{equation}
  \sum_{\brho \in S} \alpha_{\brho}=1-\varepsilon <1 \quad\mbox{and}\quad \blambda<\sum_{\brho\in S}\alpha_{\brho}\brho. \label{eq:lambda_epsilon}
\end{equation}
For state $\bx=(\ba,\bW,\bQ)\in\Omega$, we define
\begin{eqnarray}
   \tau(\bx)&=&\left\lfloor \frac{1}{(n+\mu\sqrt{n})+1}\,\min\left\{ \frac{1}{f'\left(f^{-1}(g((1-c)Q_{\max}) ))\right)},~ c\, Q_{\max}  \right\} \right\rfloor,\label{eq:deftau}    \\
   \frac{\kappa(\bx)}{\tau(\bx)}
	&=& \left( \frac{\varepsilon}{2}(1-\alpha)(1-\beta)+\frac{2n}{1-c}\big((1-\beta)\alpha+\beta\big) \right)f((1-c)Q_{\max}) \nonumber\\
	&&\mbox{}~-~ n\left( \frac{f(Q_{\max})}{\tau(\bx)} +(\mu^2+2)f'(0)+n+\mu\sqrt{n}+1 \right) \label{eq:defkappa},
\end{eqnarray}
where $\mu^2$ is an upper bound of variance of $A_i(t)$, $c$ is the constant appearing in condition {\bf C6}, $\lfloor x \rfloor$ the largest integer not greater than $x$,
and $\alpha$, $\beta\in(0,1)$ constants satisfying 
\begin{equation*}
  \frac{\varepsilon}{2}(1-\beta)(1-\alpha)-\frac{2n(\beta+(1-\beta)\alpha)}{1-c}>0.
\end{equation*}
For example, one can choose $\alpha=\beta = \frac{\varepsilon(1-c)}{32n}$.
Using the above functions, we establish the following lemma, the proof of which is presented in
Section~\ref{sec:pflemmas}.
\begin{lemma} \label{lemma:negative_drift}
  Given arrival rate vector $\blambda \in \Lambda^o$ and initial state $\bx=(\ba, \bW, \bQ)\in\Omega$ with large enough $Q_{\max}:=\max_{i\in \mathcal{I}} Q_i$, we have
  \begin{equation}
    \E\big[L(\bX(\tau(\bx)))-L(0)\,|\,\bX(0)=\bx\big]~\leq~ -\kappa(\bx).\label{eq:negative_drift}
  \end{equation}
\end{lemma}

\noindent
We explain why we define $\tau(\bx)$ and $\kappa(\bx)$ as in \eqref{eq:deftau} and \eqref{eq:defkappa}, respectively,
in Section \ref{sec:pflemmas}. In essence, we define $\tau(\bx)$ large enough so that
the weights of schedules are close to the maximum weight mostly in the time interval $[0,\tau(\bx)]$.
The definition \eqref{eq:defkappa} of $\kappa(\bx)/\tau(\bx)$
consists of the first positive and second negative terms.
If the weights of schedules are close to the maximum weight, the negative draft of $L$ occurs, which
contributes the first positive term of \eqref{eq:defkappa}.
The second negative term of \eqref{eq:defkappa} bounds the possible positive draft of $L$
for other cases.
Moreover, from Lemma \ref{lemma:negative_drift}, without loss of generality, one can assume that \eqref{eq:negative_drift} holds for every $\bx\in\Omega$ (i.e.,
\eqref{eq:thm:Lyapunov} of Lemma \ref{thm:Lyapunov} holds):
if it does not hold for $\bx$ with {small $Q_{\max}$}, one can redefine
$\tau(\bx)=\kappa(\bx)=0$ for those cases, and this redefining does not affect the following arguments that verify $B_\gamma$ is positive recurrent.

Now, we check that 
$\tau$ and $\kappa$ satisfy conditions {\bf L1}--{\bf L4}
of Lemma~\ref{thm:Lyapunov}. 
Toward this,
we investigate limits of $\tau(\bx)$ and $\kappa(\bx)/\tau(\bx)$ as $L(\bx)\to\infty$: 
\begin{eqnarray}
  \lim_{L(\bx)\to\infty} \tau(\bx)&=&\infty  \label{eq:tau_infinity}\\
  \lim_{L(\bx)\to\infty} \kappa(\bx)/\tau(\bx)&=&\infty, \label{eq:kappa_infinity}
\end{eqnarray}
the proof of which are elementary and given in Appendix~\ref{app:infty} for completeness. The above two equations imply that 
\begin{equation}\label{eq:limit_kappa}
    \lim_{L(\bx)\to\infty}\kappa(\bx)~=~\infty
\end{equation}
which verifies condition {\bf L1} (i.e., $\inf_{L(\bx)\to\infty} \kappa(\bx)>0$). In addition, since $\kappa,\tau$ are bounded as long as $L$ is bounded,
condition {\bf L3} (i.e., $\sup_{x\in B_{\gamma}}\tau(\bx)<\infty$) follows and
\eqref{eq:limit_kappa} implies condition {\bf L2} (i.e., 
$\inf_{\bx\in\Omega}\kappa(\bx)>-\infty$). 
Finally, \eqref{eq:kappa_infinity} implies condition {\bf L4} (i.e., $\limsup_{L(\bx)\to\infty} \tau(\bx)/\kappa(\bx)<\infty$).
This completes the proof of Theorem~\ref{thm:mainResult}.

\section{Applications}\label{sec:exam}
This section shows the wide applicability of our framework 
{by illustrating several throughput-optimal and low-complexity scheduling algorithms interacting with various oracle systems.}
As we mentioned in Section~\ref{sec:oracle_system}, oracle systems are derived from iterative methods for solving optimization problem \eqref{eq:max_problem}:
\begin{equation*} 
  \max \left\{ \brho\cdot\bW:=\sum_{i\in \mathcal{I}} \rho_i W_i\,:\,\brho\in \mathcal{S}\right\}
\end{equation*}
and such methods depend on the underlying structure of constrained queueing network $(\mathcal{I},\mathcal{S})$.
Thus, to illustrate an oracle system from an iterative method, we begin by introducing specific network systems in which the method finds an approximate solution to \eqref{eq:max_problem} with high probability. Then, we 
{construct} the oracle system by identifying advice space $\mathcal{A}$, inputs, and outputs, in addition
to finding function $h$ that satisfies condition~{\bf C0}. Finally, we provide explicit functions $f$ and $g$ and prove that they satisfy conditions {\bf C1}--{\bf C6} of Theorem~\ref{thm:mainResult}, {from which the throughput-optimality of the scheduling algorithm immediately follows as a corollary.}

\subsection{Random Search (RS): Pick-and-Compare}\label{sec:ES}
The first oracle system that we introduce utilizes the naive random search (RS) method, which 
maintains a current schedule $\bsigma\in \mathcal{S}$. At each iteration, the method picks a new schedule $\brho\in\{0,1\}^n$ uniformly at random and, if $\brho$ is in $\mathcal{S}$ and the weight of $\brho$ is greater than that of $\bsigma$, $\bsigma$ is replaced by $\brho$. 
Now, we formally describe the oracle system called \emph{RS oracle system}. 

\smallskip
\noindent{\bf RS oracle system.}
The advice space of the RS oracle system is $\mathcal{S}$ (i.e., $\mathcal{A}=\mathcal{S}$).
When the oracle system receives advice $\ba=\bsigma\in \mathcal A$ and weight vector $\bW\in\mathbb Z_+$ as inputs, 
it returns $\bsigma_{\oracle}(\bW, \ba)=\widehat{\bsigma}$ and $\ba_{\oracle}(\bW, \ba)=\widehat{\bsigma}$ obtained as follows:
\vspace{0.2cm}

\noindent\rule{\linewidth}{0.4mm}
\begin{itemize}
\item[1.] Pick $\brho\in\{0,1\}^n$ uniformly at random.
  \item[2.] Set
  $\widehat \bsigma=\begin{cases}
    \brho & \textrm{if $\brho\in S$ and $\brho \cdot \bW > \bsigma \cdot \bW$}\\
    \bsigma & \textrm{otherwise}
  \end{cases}.$
\end{itemize}

\noindent\rule{\linewidth}{0.4mm}
\vspace{0.1cm}
\noindent
At each query, the oracle system returns a maximum-weight schedule with a probability of at least ${1}/{2^n}$, so function $h$ in condition {\bf C0} can be defined as 
\begin{equation}
	h(W_{\max},\eta,\delta)~:=~\frac{\log\delta}{\log\left(1-1/2^n\right)},\label{eq:h_random_search}
\end{equation}
which is independent of weight $\bW$. 
The following corollary shows how we choose functions $f$ and $g$ to guarantee the throughput-optimality of the scheduling algorithm with the RS oracle system.
\begin{corollary}\label{cor:es}
The scheduling algorithm described in Section \ref{sec:algorithm_description} using the 
RS oracle system is throughput-optimal if
$$f(x) = x^a,~g(x) = x^b, ~\mbox{and}~
0<b<a<1.$$
\end{corollary}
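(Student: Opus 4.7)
The plan is to derive Corollary \ref{cor:es} as a direct consequence of Theorem \ref{thm:mainResult}, so the task reduces to verifying that the ES oracle satisfies \textbf{A1}, \textbf{A2} and that the choices $f(x)=x^a$, $g(x)=x^b$ (together with the $h$ that the ES oracle admits) satisfy \textbf{C1}--\textbf{C6}. The key observation is that for ES, the excerpt already establishes that after $2^n$ queries the oracle returns an exact maximum weight schedule, so we may take $h(W_{\max},\eta,\delta)\equiv 2^n$, a constant function. This makes \textbf{A1} hold with zero failure probability, \textbf{A2} is immediate since the advice update is a deterministic function of $(\bW,\ba)$ on a finite space, and, crucially, in \textbf{C5} and \textbf{C6} the factor involving $h$ contributes only a bounded constant.

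For the conditions on $f,g$, I would check them in order. \textbf{C1} is clear since $x\mapsto x^a$ and $x\mapsto x^b$ with $a,b\in(0,1)$ are increasing, smooth on $(0,\infty)$, and concave. \textbf{C2} follows from $g(x)/f(x)=x^{b-a}\to 0$ (using $b<a$) and $g(x)=x^b\to\infty$ (using $b>0$). \textbf{C3} is immediate, and \textbf{C4} follows from $f'(x)=ax^{a-1}\to 0$ and $g'(x)=bx^{b-1}\to 0$ since $a,b<1$. \textbf{C5} reduces to $\lim_{x\to\infty} 2^n/x=0$, which is trivial.

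The only step that requires a short computation is \textbf{C6}. Since $f^{-1}(y)=y^{1/a}$, one gets
\[
f'\!\left(f^{-1}\!\left(g((1-c)x)\right)\right)\;=\;a\,\bigl((1-c)x\bigr)^{b(a-1)/a},
\]
and because $a<1$ and $b>0$ the exponent $b(a-1)/a$ is strictly negative, so the expression tends to $0$ as $x\to\infty$ for every $c\in(0,1)$. Multiplying by the constant $h=2^n$ does not affect the limit, so \textbf{C6} holds (say, with $c=1/2$).

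I do not anticipate a real obstacle here: because the ES oracle is exact and its ``convergence time'' $h$ is independent of $\bW$, the scaling conditions \textbf{C5}--\textbf{C6}, which are the substantive restrictions in the general theorem, collapse to statements about $f$ and $g$ alone. Once each of the six conditions is verified, Theorem \ref{thm:mainResult} applies and yields positive recurrence of $\{\bX(t)\}$ for every $\blambda\in\Lambda^o$, establishing throughput-optimality of the ES-based scheduling algorithm.
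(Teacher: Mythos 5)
Your proposal is correct and follows essentially the same route as the paper: take $h\equiv 2^n$ from the exactness of ES, note \textbf{C1}--\textbf{C5} are elementary, and verify \textbf{C6} via the computation $f'(f^{-1}(g((1-c)x)))=a((1-c)x)^{b(a-1)/a}\to 0$ with the constant $2^n$ factor harmless. The paper's proof of Corollary \ref{cor:es} is exactly this calculation.
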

\begin{proof}
It is elementary to check conditions {\bf C1}--{\bf C5} of Theorem \ref{thm:mainResult} for $h(W_{\max},\eta,\delta)$ in \eqref{eq:h_random_search}. Condition {\bf C6} of Theorem \ref{thm:mainResult} can be derived as follows: for $0<c<1$,
  \begin{eqnarray*}
    \lim_{x\to\infty} f^{\prime}\left(f^{-1}\left(g\left((1-c)x\right)\right)\right) h\left(f((1+c)x),\eta,\delta\right)
    ~=~ \lim_{x\to\infty}  \frac{\log\delta}{\log\left(1-1/2^n\right)}  \,a\,\left((1-c)\right)^{\frac{b(a-1)}{a}}\,x^{\frac{b(a-1)}{a}}~=~0.
  \end{eqnarray*}
Since functions $f$ and $g$ satisfy conditions {\bf C1}--{\bf C6}, the scheduling algorithm with the RS oracle system is throughput optimal according to Theorem~\ref{thm:mainResult}.
\end{proof}

\subsection{Markov Chain Monte Carlo (MCMC)}\label{sec:mcmc_wireless}
The second oracle system comes from the Markov chain Monte Carlo (MCMC) method, which solves the optimization problem~\eqref{eq:max_problem} 
for the following interference model in wireless networks. 

\smallskip
\noindent{\bf Wireless network model.} 
An interference model in a wireless network is represented by an undirected graph $G=(\mathcal V,\mathcal E)$ with $|\mathcal V|=n$ (e.g., see \cite{RSS09, SS12}).
$\mathcal{V}$ represents the set of links or queues (i.e., $\mathcal I=\mathcal V$), and they share an edge if they cannot transmit their packets simultaneously. Therefore,
the set of all available schedules $\mathcal S$ is defined as
\begin{equation}
\mathcal S~=~\big\{\bsigma\in\{0,1\}^n~:~\sigma_i+\sigma_j\leq 1,~~\forall~(i,j)\in \mathcal E\big\}.
\end{equation}
For buffer $i\in\mathcal{I}$, we define neighborhood $\mathcal{N}(i)$ as the set of buffers, which cannot transmit packets when buffer $i$ processes a packet:
$\mathcal{N}(i):=\{j\in\mathcal{I}\,:\,(i,j)\in\mathcal{E}\}$.
Figure~\ref{fig:wireless} illustrates a wireless network in a grid interference topology with nine buffers.
\begin{figure}[ht!]
\centering
\includegraphics[width=2.5in]{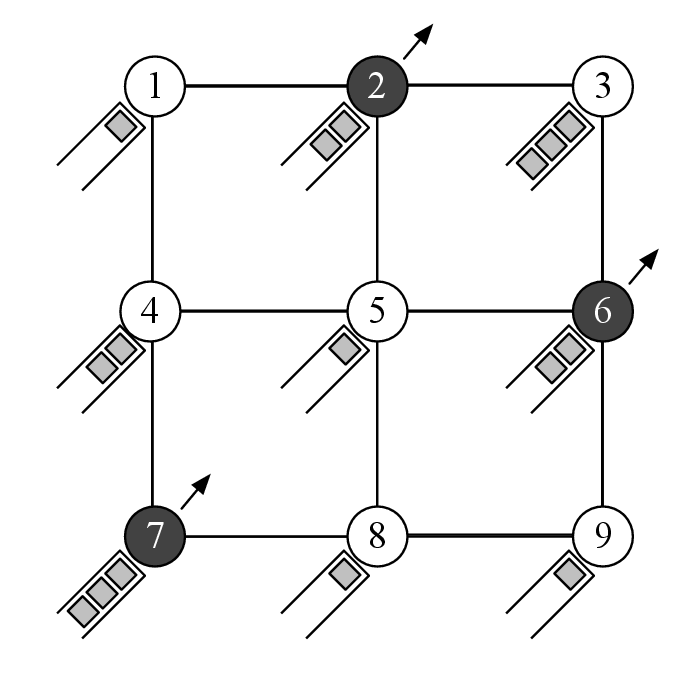}\caption{\small
Wireless network with $n=9$ queues in a grid interference topology.
Available schedules are $\{1,3,5,7,9\}$, $\{1,3,8\}$, $\{2,4,6,8\}$, $\{2,4,9\}$, $\{2,6,7\}$, $\{2,7,9\}$, and so on.
In this example, $\mathcal{N}(4)=\{1,5,7\}$.}\label{fig:wireless}
\end{figure}
 
\smallskip
\noindent{\bf MCMC oracle system.}
In the MCMC oracle system, the advice space is $\mathcal A=S$.
If the oracle system receives advice $\ba=\bsigma$ and weight vector $\bW$ as inputs, 
it returns $\bsigma_{\oracle}(\bW, \ba)=\widehat{\bsigma}$ and $\ba_{\oracle}(\bW, \ba)=\widehat{\bsigma}$, obtained from the following procedure:
\vspace{0.2cm}

\noindent\rule{\linewidth}{0.4mm}
\begin{enumerate}
  \item[1.] Choose buffer $i\in\mathcal{I}$ uniformly at random, and set 
$$\widehat \sigma_j=\sigma_j,\qquad\mbox{for all}~j\neq i.$$ 
  \item[2.] If $\sigma_j=1$ for some $j\in \mathcal{N}(i)$, then set $\widehat \sigma_i=0$.
\item[3.] 
Otherwise, set
\begin{equation*}
      \widehat{\sigma}_i=\begin{cases}
        1 & \mbox{with probability $\frac{\exp(W_i)}{1+\exp(W_i)}$}\\
        0 & \mbox{otherwise}
      \end{cases}.
    \end{equation*}
\end{enumerate}
\noindent\rule{\linewidth}{0.4mm}
\vspace{0.1cm}

\noindent
Then, existing results relating to the mixing time of MCMC show that condition {\bf C0} holds with
\begin{equation}\label{eq:hmcmc}
  h(W_{\max},\eta,\delta)= e^{C_1\,W_{\max}}\left(C_2+\log\left(\frac1{\eta\delta}\right)\right),
\end{equation}
where $C_1=C_1(n),C_2=C_2(n)$ are some (``$n$-dependent'') constants independent of $W_{\max}$. 
The proof of \eqref{eq:hmcmc} is a direct consequence of Lemmas 3 and 7 in \cite{SS12}, and 
we omit  the details because of space constraints. 
We can select functions $f$ and $g$ according to the following corollary so that the scheduling algorithm with the MCMC oracle system is throughput optimal.
\begin{corollary}\label{cor:mcmc}
  The scheduling algorithm described in Section \ref{sec:algorithm_description} using the MCMC oracle system is throughput-optimal if
  \begin{equation*}
    f(x)=(\log(x+e))^a-1\quad\mbox{and}\quad g(x)=(\log(x+e))^b-1,
  \end{equation*}
  where $0<a^2<b<a<1$.
\end{corollary}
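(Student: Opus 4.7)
The plan is to verify the hypotheses of Theorem~\ref{thm:mainResult} for the given $f$ and $g$, together with the $h$ function from \eqref{eq:hmcmc}. Assumption \textbf{A2} is immediate because the MCMC advice space equals $S$, which is finite, and Assumption \textbf{A1} together with the explicit form of $h$ is exactly what \eqref{eq:hmcmc} (citing Lemmas 3 and 7 of \cite{SS12}) provides. So the corollary reduces to checking the six analytic conditions \textbf{C1}--\textbf{C6} for the specific pair $f(x)=(\log(x+e))^a-1$, $g(x)=(\log(x+e))^b-1$.

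I would handle \textbf{C1}--\textbf{C4} by direct computation. Write $f'(x)=a(\log(x+e))^{a-1}/(x+e)$, note both factors are positive and strictly decreasing in $x$, so $f$ is increasing and concave (and differentiable), and similarly for $g$. Condition \textbf{C2} follows from $g(x)/f(x)\sim (\log(x+e))^{b-a}\to 0$ since $b<a$, while $g(x)\to\infty$. Condition \textbf{C3} is the identity $f(0)=(\log e)^a-1=0$. Condition \textbf{C4} follows from the explicit derivative formulas, since $a-1<0$ and $1/(x+e)\to 0$. For \textbf{C5}, note that $h(f(x),\eta,\delta)$ is (up to a factor depending only on $\eta,\delta,n$) $e^{C_1 f(x)}=e^{C_1(\log(x+e))^a - C_1}$. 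Because $a<1$, one has $(\log(x+e))^a=o(\log x)$, hence $e^{C_1 f(x)}=x^{o(1)}$, and dividing by $x$ yields $0$.

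The main step, and the only place where the specific assumption $a^2<b$ is used, is \textbf{C6}. Inverting $f$ gives $f^{-1}(y)=\exp((y+1)^{1/a})-e$, so after substituting $y=g((1-c)x)$ one obtains
\begin{equation*}
f^{-1}(g((1-c)x))+e \;=\; \exp\!\bigl((\log((1-c)x+e))^{b/a}\bigr).
\end{equation*}
Plugging this into the formula for $f'$ yields
\begin{equation*}
f'\!\bigl(f^{-1}(g((1-c)x))\bigr) \;=\; \frac{a\,(\log((1-c)x+e))^{(b/a)(a-1)}}{\exp\!\bigl((\log((1-c)x+e))^{b/a}\bigr)}.
\end{equation*}
Multiplying by $h(f((1+c)x),\eta,\delta)$, whose dominant factor is $\exp\!\bigl(C_1(\log((1+c)x+e))^{a}\bigr)$, produces an overall factor of the form
\begin{equation*}
\exp\!\Bigl(C_1(\log((1+c)x+e))^{a}\;-\;(\log((1-c)x+e))^{b/a}\Bigr)
\end{equation*}
times a polynomially decaying term in $\log x$. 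Since $\log((1\pm c)x+e)\sim\log x$, the exponent behaves like $C_1(\log x)^{a}-(\log x)^{b/a}$. The assumption $b>a^{2}$ is exactly the statement $b/a>a$, so the negative term dominates and the whole expression tends to zero regardless of the constant $C_1=C_1(n)$.

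The only real obstacle is \textbf{C6}: one must track the double exponential/logarithmic compositions carefully and observe that the multiplicative constant $C_1$ from \eqref{eq:hmcmc} cannot spoil the limit, because the comparison is between two different powers of $\log x$ (namely $a$ and $b/a$) rather than between two constants. The choice $f,g\sim(\log x)^{a},(\log x)^{b}$ is essentially the unique scaling that simultaneously makes \textbf{C5} manageable, given the exponential-in-$W_{\max}$ mixing time, and makes \textbf{C6} hold, and the strict inequality $a^{2}<b<a<1$ is tight for both conditions.
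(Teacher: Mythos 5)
Your proposal is correct and follows essentially the same route as the paper: verify \textbf{A1}--\textbf{A2} via \eqref{eq:hmcmc}, check \textbf{C1}--\textbf{C4} directly, handle \textbf{C5} by noting $e^{C_1(\log(x+e))^a}=x^{o(1)}$, and reduce \textbf{C6} to the comparison $C_1(\log x)^a$ versus $(\log x)^{b/a}$, where $b>a^2$ gives $b/a>a$. Your intermediate formulas for $f^{-1}$, $f'\bigl(f^{-1}(g((1-c)x))\bigr)$ and the resulting exponent match the paper's displayed computation exactly.
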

\begin{proof}
  It is elementary to check conditions {\bf C1}--{\bf C4} of Theorem \ref{thm:mainResult}.
  Condition {\bf C5} is from \eqref{eq:hmcmc} and $f(x)=(\log(x+e))^a-1$: 
  \begin{align*}
     \frac{h(f(x),\eta,\delta)}{x} 
    ~=~ \left( C_2+ \log\left( \frac{1}{\eta\delta}\right)\right)
    \times e^{C_1\left(\log(x+e)\right)^a-\log x- C_1}~\stackrel{{x\to\infty}}{\to}~0.
  \end{align*}
  Furthermore,  condition {\bf C6} can be derived as follows:
  \begin{eqnarray*}
    \lefteqn{f'(f^{-1}(g((1-c)x)))\,h(f((1+c)x))} \\
    &=& \frac{a\left( C_2+ \log\left( 1/(\eta\delta)\right)\right)}
    {\left(\log((1-c)x+e)^{\frac{(1-a)b}{a}}\right)}
    \times
    e^{C_1\left(\log((1+c)x+e)\right)^a-\log((1-c)x+e)^{\frac{b}{a}}- C_1}
~\stackrel{{x\to\infty}}{\to}~ 0.
  \end{eqnarray*}
  This completes the proof of Corollary~\ref{cor:mcmc}.
\end{proof}

\noindent
We note that the scheduling algorithm described in Section \ref{sec:algorithm_description} using the MCMC oracle system
is a discrete-time version of the CSMA algorithm in \cite{RSS09, SS12}.

\subsection{Belief Propagation (BP)}\label{sec:bp} 
We derive the third oracle system from the belief propagation (BP) method, a popular heuristic iterative method for solving inference problems arising in probabilistic graphical models \cite{J04}.
For the provable throughput-optimality of the scheduling algorithm with the BP oracle system, 
we introduce a special constrained queueing network, called {\em input-queued switch model} \cite{KGL02}.

\smallskip
\noindent{\bf Input-queued switch model.}
An input-queued switch consists of $m$ input ports and $m$ output ports. 
An input port has $m$ buffers each of which stores packets to an output port. Thus, the total number of buffers in the system is $n=m^2$. 
Scheduling constraints in the input-queued switch as follows:
\begin{enumerate}
  \item Every input port can transmit at most one packet.
  \item Every output port can receive at most one packet.
\end{enumerate}
When an output port receives a packet, the packet leaves the system. We represent the above input-queue switch as an undirected complete bipartite graph of left vertices $\mathcal L$, right vertices $\mathcal R$, and edges $\mathcal E= \{(i,j)\,:\,i\in L,\,j\in R\}$, where $|\mathcal L|=|\mathcal R|=m$.
Then, each buffer is dented by $(i,j)\in \mathcal E$, so $\mathcal{I}=\mathcal{E}$. The set of all possible schedules is
\smallskip
\begin{equation}
\mathcal S=\left\{\bsigma\in\{0,1\}^{\mathcal E}~:~\begin{array}{l}
  \sum_{j:(i,j)\in \mathcal{E}}\sigma_{ij}\leq 1~\forall i\in \mathcal R,\\
  \sum_{i:(i,j)\in \mathcal{E}}\sigma_{ij}\leq 1~\forall j\in \mathcal L
\end{array}
\right\}. \label{eq:S_inputq}
\end{equation}
\smallskip
One can observe that this model is 
a special case
of the wireless network model described in the previous section.

\begin{figure}[ht!]
\centering
\includegraphics[width=2.5in]{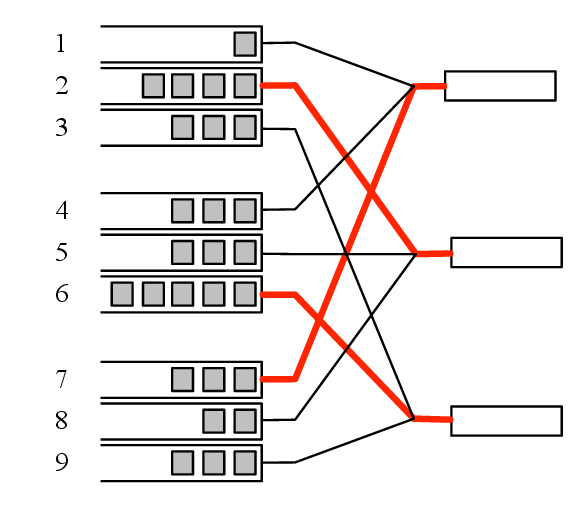}\caption{\small
Input-queued switch with $m=3$ input ports, $m=3$ output ports, and $m^2=9$ buffers. Available schedules are 
$\{1,5,9\}$, $\{1,6,8\}$, $\{2,4,9\}$, $\{2,6,7\}$, $\{3,4,8\}$, and $\{3,5,7\}$.}\label{fig:switch}
\end{figure}

\smallskip
\noindent{\bf BP oracle system.}
In the BP oracle system, the advice space is $\mathcal A = \mathbb Z_+^{2|\mathcal E|}\times \mathcal S$.
For the inputs of  weight vector $\bW$ and advice $\ba = (\bm, \bsigma)\in \mathcal A$, where $\bm = [m_{i\to j}, m_{j\to i}:(i,j)\in \mathcal E]$,
the oracle system outputs $\bsigma_{\oracle}(\bW, \ba)=\widehat{\bsigma}$ and $\ba_{\oracle}(\bW, \ba)=(\widehat \bm, \widehat{\bsigma})$
 calculated as follows:
\vspace{0.2cm}

\noindent\rule{\linewidth}{0.4mm}
\begin{itemize}
\item[1.] For each $(i,j)\in  \mathcal E$, set
\begin{eqnarray*}
  \widehat \sigma_{(i,j)}&=&\begin{cases}
    0 & \textrm{if $m_{i\to j}+m_{j\to i} >  W_{(i,j)}^{\prime}$}\\
    1 & \textrm{otherwise}
  \end{cases}\\
\widehat{m}_{i\to j}&=&\max_{k\neq j:(i,k)\in\mathcal  E} \left(W^{\prime}_{(i,k)} - m_{k\to i}\right)_+, 
\end{eqnarray*}
where 
$$W_{(i,j)}^{\prime} := W_{(i,j)}+ r_{(i,j)}
\quad\textrm{and}\quad
(x)_+:=\begin{cases}
x ~\mbox{if}~ x\geq 0\\
0~\mbox{otherwise}
\end{cases}.$$ 
\item[2.] If $\widehat \bsigma \notin \mathcal S$, reset $\widehat \bsigma = \bsigma$.
\end{itemize}
\noindent\rule{\linewidth}{0.4mm}
\vspace{0.2cm}
\noindent
In the above procedure, we need to choose $[r_{(i,j)}]\in [0,1]^{|\mathcal E|}$ such that 
$\bsigma^*\in \argmax_{\bsigma}\bsigma \cdot \bW^{\prime}
=\argmax_{\bsigma}\bsigma \cdot \bW$
is unique, and $\xi\leq \bsigma^*\cdot \bW^{\prime} - \max_{\bsigma\neq \bsigma^*} \bsigma\cdot \bW^{\prime}$
for some constant $\xi>0$. For example, we can set 
$$\mbox{$r_{(i,j)}=\frac1{2^i 2^{m+j}}$,$\quad$ where $i,j\in\{1,\dots, m\}$.}$$
Then, 
from work by Bayati et al.\ \cite{BSS08} and Sanghavi et al.\ \cite{SMW07}, condition {\bf C0} holds with 
$$h=h(W_{\max},\eta,\delta)=O(W_{\max}/\xi).$$
The following corollary suggests to the choice of functions $f$ and $g$ so that the scheduling algorithm with the BP oracle system is throughput optimal.

\begin{corollary}\label{cor:bp}
The scheduling algorithm described in Section \ref{sec:algorithm_description} using the BP oracle system is throughput-optimal if
\begin{equation*}
  f(x) = x^a,\ g(x) = x^b,\ \mbox{and}\ 0<\frac{a^2}{1-a}<b<a<\frac12.
\end{equation*}
\end{corollary}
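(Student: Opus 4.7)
The plan is to directly verify the hypotheses of Theorem~\ref{thm:mainResult}, namely the oracle-side assumptions \textbf{A1}--\textbf{A2} and the function-side conditions \textbf{C1}--\textbf{C6}, for the specific oracle and the functions $f(x)=x^{a}$, $g(x)=x^{b}$ given in the statement. The oracle-side conditions were essentially established in the paragraph preceding the corollary: \textbf{A1} follows from the max-product BP analysis of \cite{BSS08,SMW07}, which (after the tie-breaking perturbations $r_{(i,j)}$ ensure a unique maximizer with gap $\xi$) gives $h(W_{\max},\eta,\delta)=O(W_{\max}/\xi)$; and \textbf{A2} follows from the observation $\mathbb{A}(\bW)\subset\{0,1,\ldots,W_{\max}\}^{2|\mathcal E|}\times S$. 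I would start by just citing these facts and then reduce the corollary to pure calculus on the power functions.

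For the function-side conditions, \textbf{C1}--\textbf{C4} are immediate for power functions with $0<b<a<1$: $f$ and $g$ are increasing, differentiable, and concave; $g(x)/f(x)=x^{b-a}\to 0$ and $g(x)=x^{b}\to\infty$; $f(0)=0$; and both $f'(x)=ax^{a-1}$ and $g'(x)=bx^{b-1}$ tend to $0$. Condition \textbf{C5} uses only the linear-in-$W_{\max}$ growth of $h$: plugging $f(x)=x^{a}$ gives
\begin{equation*}
\frac{h(f(x),\eta,\delta)}{x}=O\!\left(\frac{x^{a}}{x}\right)=O(x^{a-1})\;\longrightarrow\;0,
\end{equation*}
since $a<1$.

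The real content is condition \textbf{C6}, which is where the hypothesis $\frac{a^{2}}{1-a}<b$ is used. A direct substitution gives
\begin{equation*}
f'\!\bigl(f^{-1}(g((1-c)x))\bigr)=a\bigl(((1-c)x)^{b/a}\bigr)^{a-1}=a(1-c)^{b(a-1)/a}\,x^{b(a-1)/a},
\end{equation*}
while $h(f((1+c)x),\eta,\delta)=O\bigl(((1+c)x)^{a}\bigr)=O(x^{a})$. Multiplying the two yields an asymptotic rate $O\bigl(x^{\,a+b(a-1)/a}\bigr)$, and convergence to zero requires the exponent to be strictly negative, i.e.,
\begin{equation*}
a+\frac{b(a-1)}{a}<0\quad\Longleftrightarrow\quad a^{2}<b(1-a)\quad\Longleftrightarrow\quad b>\frac{a^{2}}{1-a},
\end{equation*}
which is exactly the hypothesis. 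The role of the extra constraint $a<\tfrac12$ is simply to ensure that the admissible interval $\bigl(\tfrac{a^{2}}{1-a},\,a\bigr)$ for $b$ is nonempty, since $\tfrac{a^{2}}{1-a}<a$ is equivalent to $a<1-a$.

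There is no substantive obstacle in this proof; the only care needed is bookkeeping of exponents in \textbf{C6}, together with keeping straight that the inequality $b<a<\tfrac12$ is what allows $b$ to be chosen strictly greater than $\tfrac{a^{2}}{1-a}$ while remaining below $a$. Everything else is a transparent appeal to Theorem~\ref{thm:mainResult} and the prior remarks on BP convergence.
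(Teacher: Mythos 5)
Your proposal is correct and follows essentially the same route as the paper: cite the BP convergence results for \textbf{A1}--\textbf{A2} with $h(W_{\max},\eta,\delta)=O(W_{\max}/\xi)$, verify \textbf{C1}--\textbf{C5} by inspection, and reduce \textbf{C6} to the exponent inequality $a+\tfrac{b(a-1)}{a}<0\iff b>\tfrac{a^2}{1-a}$, which is exactly the computation in the paper. Your added remark that $a<\tfrac12$ is what makes the interval $\bigl(\tfrac{a^2}{1-a},a\bigr)$ nonempty is a correct and useful clarification, though the paper does not state it explicitly.
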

\begin{proof}
It is elementary to check conditions {\bf C1}--{\bf C5} of Theorem \ref{thm:mainResult}, where $h(W_{\max},\eta,\delta)=O(W_{\max}/\xi)$. 
  Condition {\bf C6} of Theorem \ref{thm:mainResult} can be derived as follows: for $c>0$, 
  \begin{eqnarray*}
    \lefteqn{\lim_{x\to\infty} f^{\prime}\left(f^{-1}\left(g\left((1-c)x\right)\right)\right) h\left(f((1+c)x),\eta,\delta\right)}\\
    &=& \lim_{x\to\infty} 
    C\cdot x^{\frac{b(a-1)}{a}}\left((1+c)^a x^a+1\right)~=~ 0,
  \end{eqnarray*}
  where $C$ is some constant depending on $\xi, c,a,b$ and the last equality is from $0<b,a<1$ and $b>\frac{a^2}{1-a}$. 
This completes the proof of Corollary \ref{cor:bp}.
\end{proof}

We also note that one can design the BP oracle in various ways, one of which is the following:

\noindent\rule{\linewidth}{0.4mm}
\begin{itemize}
\item[1.] For each $(i,j)\in \mathcal E$, set
\begin{eqnarray*}
\widehat \sigma_{(i,j)}&=&0\\
  b_{(i,j)}&=& W_{(i,j)}^{\prime}- m_{i\to j}-m_{j\to i}  \\
\widehat{m}_{i\to j}&=&\max_{k\neq j:(i,k)\in \mathcal E} \left(W^{\prime}_{(i,k)} - m_{k\to i}\right)_+.
\end{eqnarray*}
\item[2.] Choose $(i,j)\in E$ so that $b_{(i,j)}$ is the largest among  
those which $\widehat \bsigma \in S$ after resetting $\widehat \sigma_{(i,j)}=1$.
Reset $\widehat \sigma_{(i,j)}=1$, and keep this ``greedy'' procedure until no edge is found.
\end{itemize}

\noindent\rule{\linewidth}{0.4mm}
\vspace{0.05cm}

\noindent
While the first BP oracle system simply checks whether the ``belief'', $b_{(i,j)}$, is positive or not, the second BP oracle system determines schedule $\widehat\bsigma(t)$ greedily based on $[b_{(i,j)}]$.
When we use the same set of functions $f$ and $g$ in Corollary~\ref{cor:bp}, the scheduling algorithm with the above second BP oracle system is also throughput optimal, and the proof is identical to that of the first BP oracle system. 
We note that a similar version of the scheduling algorithm using the second oracle system was first studied in \cite{SDPM10} heuristically, but
our results (Theorem~\ref{thm:mainResult}) provide its formal throughput-optimality proof, which is missing in \cite{SDPM10}.

\subsection{Primal-Dual Method (PDM)}\label{sec:cp_wireless}
We introduce the fourth oracle system, called the primal-dual method (PDM). 
For a detailed description of the oracle, we first introduce a primary interference constrained wireless model.

\smallskip
\noindent{\bf Primary interference constrained wireless network model.} This network model is represented by a directed graph, $G=(\mathcal V,\mathcal E)$ with $|\mathcal E|=n$, and the set of available schedules $\mathcal S$ is defined as
\begin{equation}
\mathcal S~=~\left\{\bsigma\in\{0,1\}^{\mathcal E}~:~\sum_{j:(j,i)\in \mathcal E}\sigma_{ji}\leq 1,~~\sum_{j:(i,j)\in \mathcal E}\sigma_{ij}\leq 1,~~\forall~i\in \mathcal V\right\}.
\end{equation}
The above ``matching'' scheduling constraint has been popularly used for modeling primary interference in wireless networks \cite{SBS07}, which is also a
 special case of the wireless network model in Section \ref{sec:mcmc_wireless}.

\begin{figure}[ht!]\label{fig:primary}
\centering
\includegraphics[width=2.5in]{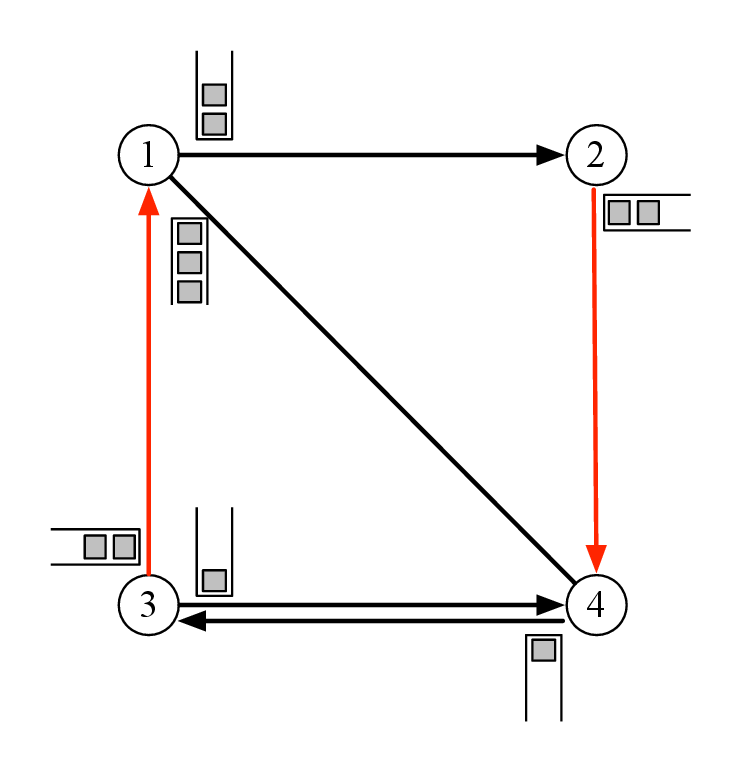}\caption{\small
Primary wireless network. Available schedules are $\{ 1\to 2, 3\to 4\},
\{1\to 4\},\{2\to 4,3\to 1 \},$ and so on.}
\end{figure}

\smallskip
\noindent{\bf PDM oracle system.}
The PDM method is an iterative mechanism introduced by Edmonds \cite{E65,E65-1} which 
maintains primal and dual variables of a Linear Programming (LP), 
and updates them until the primal solution $\mathbf x\in \mathcal S$ reaches the maximum weight schedule (i.e., matching).
At each iteration, the primal solution always forms a matching and the dual solution $\by$ is feasible, where each edge in the primal matching
should be `tight' with respect to the dual solution (see the most recent implementation
of the Edmonds' algorithm by Kolmogorov \cite{K09} for more details).
Formally, in the PDM oracle system, advice space $\mathcal A$ is the set of primal and dual variables, and for advice $\ba=(\mathbf x, \mathbf y)\in\mathcal A$, 
the oracle outputs $\bsigma_{\oracle}(\bW, \ba)=\widehat{\bsigma}$ and $\ba_{\oracle}(\bW, \ba)=\widehat{\ba}$, which are chosen as follows:

\vspace{0.2cm}

\noindent\rule{\linewidth}{0.4mm}
\begin{itemize}
\item[1.] If the dual solution $\mathbf y$ is not feasible, make it feasible by re-normalizing.
  \item[2.] If some edge in the primal solution $\mathbf x$ is not tight with respect to the dual solution, remove it.
\item[3.] Obtain new primal and dual solutions $\widehat{\mathbf x}, \widehat{\mathbf y}$,
as described in \cite{K09}.
\item[4.] Set $\widehat{\ba}=(\widehat{\mathbf x}, \widehat{\mathbf y})$ and $\widehat \bsigma=\widehat{\mathbf x}$.
\end{itemize}

\noindent\rule{\linewidth}{0.4mm}
\vspace{0.1cm}
\noindent
It is well known that condition {\bf C0} holds with
$h=h(W_{\max},\eta,\delta)=O(n).$
Then, the following theorem suggests how to select functions $f$ and $g$ so that the scheduling algorithm with the PDM oracle system is throughput optimal.

\begin{corollary}\label{cor:cp}
The scheduling algorithm described in Section \ref{sec:algorithm_description} using the 
PDM oracle system is throughput-optimal if
$$f(x) = x^a,~g(x) = x^b, ~\mbox{and}~
0<b<a<1.$$
\end{corollary}
\begin{proof}
It is elementary to check conditions {\bf C1}--{\bf C5} of Theorem \ref{thm:mainResult}, for $h(W_{\max},\eta,\delta)=O(n)$. 
Condition {\bf C6} of Theorem \ref{thm:mainResult} can be derived as follows: for $0<c<1$, since $h$ is independent on $W_{\max}$, 
  \begin{eqnarray*}
    \lefteqn{\lim_{x\to\infty} f^{\prime}\left(f^{-1}\left(g\left((1-c)x\right)\right)\right) h\left(f((1+c)x),\eta,\delta\right)}\\
    &=& \lim_{x\to\infty}  h(x,\eta,\delta)  \,a\,\left((1-c)\right)^{\frac{b(a-1)}{a}}\,x^{\frac{b(a-1)}{a}}~=~0.
  \end{eqnarray*}
Because $f$ and $g$ satisfy all conditions in Theorem~\ref{thm:mainResult}, the scheduling algorithm with the PDM oracle system is throughput optimal.
\end{proof}

\section{Proof of Lemma~\ref{lemma:negative_drift}}\label{sec:pflemmas}
In this section, we show the following
negative drift property of $L$:
\begin{equation}
    \E\big[L(\bX(\tau(\bx)))-L(0)\,|\,\bX(0)=\bx\big]~\leq~ -\kappa(\bx),\label{eq:desired}
\end{equation}
for arrival rate vector $\blambda \in \Lambda^o$ and initial state $\bx=(\ba, \bW, \bQ)\in\Omega$ with large enough $Q_{\max}:=\max_{i\in \mathcal{I}} Q_i$. 

\smallskip
\noindent {\bf Proof outline.} The proof consists of several steps with associated lemmas and propositions.
{Before we detail the proof, we summarize our high-level strategy to prove Lemma \ref{lemma:negative_drift}.}

First,  
we introduce a random variable $\Delta(\bx)$ such that
\begin{equation*}
	\E\big[L(\bX(\tau(\bx)))-L(0)\,|\,\bX(0)=\bx\big]~\leq~\E\big[ \Delta(\bx)\,|\,\bX(0)=\bx \big]+O(1).
\end{equation*}
Then, we define an event $\mathcal{E}_1$ that occurs with high probability, and 
on the event, we obtain an upper bound of $\E\big[\Delta(\bx)\,|\,\bX(0)=\bx\big]$,
which is formally stated in Lemma~\ref{lemma:bounds_on_E1}.
This leads to 
the proof of 
the desired inequality \eqref{eq:desired}, where the definition \eqref{eq:defkappa}
of $\kappa(\bx)$ is used.
To prove Lemma \ref{lemma:bounds_on_E1},
we show that the weight $\bW(t)\approx f(\bQ(t)) $ does not change many times on $[0,\tau(\bx)]$ for large enough $Q_{\max}$ under $\mathcal{E}_1$,
which is formally stated in Proposition~\ref{prop:stopping_time}, and 
{in its proof, we define $\tau(\bx)$ appropriately as in \eqref{eq:deftau}.}
Since $\bW(t)$ remains fixed for long enough time on $[0,\tau(\bx)]$, the oracle satisfying condition {\bf C0} returns schedules with weights close to the maximum weight mostly in the time interval, which is formally stated in Proposition~\ref{prop:main_lemma}. 
This leads to the proof of Lemma \ref{lemma:bounds_on_E1}.

We provide the proofs of Lemma~\ref{lemma:bounds_on_E1},
Proposition~\ref{prop:stopping_time}, and Proposition \ref{prop:main_lemma} in 
Section \ref{sec:cond_exp_E}, Section~\ref{sec:pf_st}, and Section \ref{sec:pr_ber}, respectively.

\subsection{Proof of Lemma~\ref{lemma:negative_drift}}\label{sec:pf:lemma:negative_drift}
In this subsection, we provide the proof of Lemma \ref{lemma:negative_drift} apart from a key lemma, Lemma \ref{lemma:bounds_on_E1}.
For notational simplicity, we use $L(t)$ to denote $L(\bX(t))$.
We start with the following proposition, the proof of which is quite standard in the literature (e.g., see \cite{TE92}).
\begin{proposition}\label{prop:max_weight}
For the Markov chain $\{\bX(t)\,:\,t\in\mathbb Z_+\}$ defined in Section~\ref{sec:pfmainthm}, we have
\begin{eqnarray}
  L(t+1)-L(t) 
  &=& \sum_{i=1}^n \int_{Q_i(t)}^{Q_i(t+1)} f(s) d s \nonumber \\
  &\leq& \bA(t)\cdot f(\bQ(t))-\bsigma(t)\cdot f(\bQ(t))+
  f'(0)\left(\sum_{i=1}^n A_i(t)^2+n\right), \label{eq:general_bound_L}
\end{eqnarray} 
where $\bA(t)=[A_i(t):i\in \mathcal{I}]$ and $f(\bQ(t))=[f(Q_i(t)):i\in \mathcal{I}]$.
\end{proposition}
\begin{proof} It is sufficient to show that
\begin{equation}
  \int_{Q_i(t)}^{Q_i(t+1)} f(s) d s ~\leq~ f(Q_i(t))A_i(t)-f(Q_i(t))\sigma_i(t)+f'(0)(A_i^2(t)+1),\quad\forall i\in \mathcal{I}.\label{eq:bound_L_each}
\end{equation}
We verify \eqref{eq:bound_L_each} in cases: $Q_i(t+1)\geq Q_i(t)$ and $Q_i(t+1)<Q_i(t)$. 

\smallskip
First, assume that $Q_i(t+1)\geq Q_i(t)$. Since $f$ is convex and $f'$ is non-increasing, we obtain
\begin{equation*}
  f(s)~\leq~f(Q_i(t))+f'(Q_i(t))(s-Q_i(t))~\leq~f(Q_i(t))+f'(0)(s-Q_i(t))~\leq~f(Q_i(t))+f'(0)A_i(t),
\end{equation*}
for all $Q_i(t)\leq s\leq Q_i(t+1)$. Therefore, we conclude that
\begin{eqnarray*}
  \int_{Q_i(t)}^{Q_i(t+1)} f(s) d s
  &\leq& \big(f(Q_i(t))+f'(0)A_i(t)\big)\big(Q_i(t+1)-Q_i(t)\big)\nonumber \\
  &\leq& f(Q_i(t))\,A_i(t)-f(Q_i(t))\,\sigma_i(t)+f'(0)A_i(t)^2,
\end{eqnarray*}
which shows \eqref{eq:bound_L_each} holds when $Q_i(t+1)\geq Q_i(t)$.

\smallskip
Second, suppose that $Q_i(t+1)<Q_i(t)$. Then, because $f$ is convex and $f'$ is non-increasing, we have
\begin{equation*}
  f(Q_i(t))~\leq f(s)+f'(s)(Q_i(t)-s)~\leq~f(s)+f'(0)(Q_i(t)-s),\quad\forall s\in[Q_{i+1}(t),Q_i(t)],
\end{equation*}
so we obtain
\begin{equation*}
  -f(s)~\leq~ -f(Q_i(t))+f'(0)(Q_i(t)-s)~\leq~-f(Q_i(t))+f'(0),\quad\forall s\in[Q_{i+1}(t),Q_i(t)],
\end{equation*}
where we use $Q_{i+1}(t)\geq Q_i(t)-1$ for the last inequality.
This inequality implies that
\begin{eqnarray*}
  \int_{Q_i(t)}^{Q_i(t+1)} f(s) d s
  &=&\int_{Q_i(t+1)}^{Q_i(t)}-f(s) d s \nonumber \\
  &\leq& \int_{Q_i(t+1)}^{Q_i(t)}-f(Q_i(t))+f'(0) d s\nonumber \\
  &=& \int_{Q_i(t)}^{Q_i(t+1)} f(Q_i(t))-f'(0) d s \nonumber \\
  &\leq& f(Q_i(t))A_i(t)-f(Q_i(t))\sigma_i(t)+f'(0)
\end{eqnarray*}
where the last inequality follows from $Q_i(t+1)-Q_i(t)\geq -1$. This inequality verifies \eqref{eq:bound_L_each} for the case of $Q_i(t+1)<Q_i(t)$.
\end{proof}

When one takes expectation of \eqref{eq:general_bound_L}, the first term of right hand side becomes
\begin{eqnarray}
  \E_\bx[\bA(t)\cdot f(\bQ(t))]&:=& \E[\bA(t)\cdot f(\bQ(t))\,|\,\bX(0)=\bx] \nonumber \\[0.1in]
  &=& \E_\bx[\bA(t)]\cdot \E_\bx[f(\bQ(t))] \nonumber \\
  &\leq& \sum_{\brho \in S} \alpha_{\brho}\brho \cdot \E_\bx[f(\bQ(t))] 
  ~\leq~ (1-\varepsilon)\;\E_\bx\!\!\left[\max_{\brho\in S}\big\{\brho\cdot f(\bQ(t))\big\}\right], \label{eq:arrival_bound}
\end{eqnarray}
where the inequalities come from \eqref{eq:lambda_epsilon}. Here, to simplify notation, we use $\E_\bx[Y]$ to denote the conditional expectation of random variable $Y$ under the initial state $\{\bX(0)=\bx\}$.
Note that $\E[A_i(t)^2]=\var(A_i(t))+\E[A_i(t)]^2\leq \mu^2+1$. Then, by summing \eqref{eq:general_bound_L} from $t=0$ to $t=\tau(\bx)-1$ and applying \eqref{eq:arrival_bound}, we obtain
\begin{eqnarray}
\E_{\bx}\!\left[L(\tau(\bx))-L(0)\right]
&=&\E_\bx\!\!\left[\sum_{t=0}^{\tau(\bx)-1}L(t+1)-L(t)\right] \nonumber \\
&\leq& \sum_{t=0}^{\tau(\bx)-1}\E_\bx\!\left[ \bA(t)\cdot f(\bQ(t))-\bsigma(t)\cdot f(\bQ(t))\right]
 +n\left(\mu^2+2\right)f'(0)\tau(\bx) \nonumber \\[5pt]
&\leq& \E_\bx[\Delta(\bx)]+n\left(\mu^2+2\right)f'(0)\,\tau(\bx) \label{eq:L_Delta},
\end{eqnarray}
where 
$$
\Delta(\bx) := \sum_{t=0}^{\tau(\bx)-1}\left((1-\varepsilon)\,\max_{\brho\in S}\big\{\brho\cdot f(\bQ(t)\big\}-\bsigma(t)\cdot f(\bQ(t))\right).$$
This inequality shows that if $\bsigma(t)\cdot f(\bQ(t))$ is close to $\max_{\brho\in S}\big\{\brho\cdot f(\bQ(t)\big\}$ for most of time, $\Delta(\bx)$ is negative, i.e., $
L$ has the desired negative drift property. 

Next, we aim for bounding $\E_\bx[\Delta(\bx)]$. To this end, we consider the following event
\begin{equation*}
  \mathcal{E}_1 := \bigg\{ A_{\max}(0)+\dots+A_{\max}(\tau(\bx)-1)\leq (n+\mu\sqrt{n}+1)\,\tau(\bx) \bigg\},
\end{equation*}
where $A_{\max}(t):=\max\{1,A_1(t),\dots,A_n(t)\}$. 
The following lemma establishes the conditional expectation of $\Delta(\bx)$ given $\mathcal E_1$, which will be used later for bounding $\E_\bx[\Delta(\bx)]$.
Here, to simplify notation, we use $\pr_\bx[\mathcal{A}]$ to denote the conditional probability of event $\mathcal{A}$ under  the initial state $\{\bX(0)=\bx\}$. 
\begin{lemma} \label{lemma:bounds_on_E1}
For any $\alpha,\beta\in (0,1)$ and initial state $\bx=(\ba,\bW,\bQ)\in\Omega$ with large enough $Q_{\max}$, it follows that
  \begin{eqnarray}
  && \E_\bx[\Delta(\bx)\,|\,\mathcal{E}_1] ~\leq~ \left( -\frac{\varepsilon}{2}(1-\alpha)(1-\beta)+\frac{2n}{1-c}\big((1-\beta)\alpha+\beta\big) \right)f((1-c)Q_{\max})\tau(\bx), \label{eq:bound_delta_E1}\\
  && \pr_\bx[\mathcal{E}_1^c]\;\E_\bx[\Delta(\bx)\,|\,\mathcal{E}_1^c]
  ~\leq~ \left( \frac{n\,f(Q_{\max})}{\tau(\bx)}+n(n+1)+n\mu\sqrt{n}\right)\tau(\bx)\label{eq:bound_delta_E1c}.
\end{eqnarray}
\end{lemma}
\noindent The proof of the above lemma is given in Section \ref{sec:cond_exp_E}. 
A high level intuition for event $\mathcal{E}_1$ and above lemma is as follows. 
$A_{\max}(t)$ is at least the maximum change of queue length for each queue during $[t,t+1)$; that is, $|Q_{i}(t+1)-Q_{i}(t)|\leq A_{\max}(t)$, for every $i\in \mathcal{I}$. In other words, on $\mathcal{E}_1$, $Q_i(t)$ in $[0,\tau(\bx)]$ does not change too much. 
Namely, $\bW(t)\approx f(\bQ(t))$ does not change many times in $[0,\tau(\bx)]$ for $\bx=(\ba,\bW,\bQ)$ with large enough $Q_{\max}$. From condition {\bf C0} of the oracle system, the schedule $\bsigma(t)$ is close to a max-weight one with respect to $f(\bQ(t))$ `mostly' in the time interval $[0,\tau(\bx))]$, which guarantees the negative drift of $\Delta(\bx)$ on $\mathcal{E}_1$, i.e., \eqref{eq:bound_delta_E1}.
On the other hand, \eqref{eq:bound_delta_E1c} holds essentially because the event $\mathcal E_1$ occurs with high probability.
Now, we are ready to complete the proof of Lemma~\ref{lemma:negative_drift} using upper bounds \eqref{eq:bound_delta_E1} and \eqref{eq:bound_delta_E1c}.
For any $\bx=(\ba,\bW,\bQ)\in\Omega$ with large enough $Q_{\max}$, from \eqref{eq:L_Delta}, we have
\begin{eqnarray*}
\E_\bx[L(\tau(\bx))-L(0)]
	&\leq& \pr_\bx[\mathcal{E}_1]\ \E_\bx[ \Delta(\bx)\,|\,\mathcal{E}_1]
		+\  \pr_\bx[\mathcal E_1^c]\ \E_\bx[ \Delta(\bx)\,|\,\mathcal E_1^c]  + n\left(\mu^2+2\right)f'(0)\tau(\bx) \\[5pt]
	&\leq& \left( -\frac{\varepsilon}{2}(1-\alpha)(1-\beta)+\frac{2n}{1-c}\big((1-\beta)\alpha+\beta\big) \right)f((1-c)Q_{\max})\tau(\bx) \\
		 &&\;+\;\left( \frac{n\,f(Q_{\max})}{\tau(\bx)}+n(n+1)+n\mu\sqrt{n}\right)\tau(\bx)\\
		&&\mbox{} +\; n\left(\mu^2+2\right)f'(0)\tau(\bx)\\
	&\leq& \left( -\frac{\varepsilon}{2}(1-\alpha)(1-\beta)+\frac{2n}{1-c}\big((1-\beta)\alpha+\beta\big) \right)f((1-c)Q_{\max})\tau(\bx)\\
	&&\mbox{}+\; n\left( \frac{f(Q_{\max})}{\tau(\bx)} +(\mu^2+2)f'(0)+n+\mu\sqrt{n}+1 \right)\tau(\bx)\\
	  &=& -\kappa(\bx),
\end{eqnarray*}
which completes the proof of Lemma~\ref{lemma:negative_drift}.

\subsection{Proof of Lemma~\ref{lemma:bounds_on_E1}}\label{sec:cond_exp_E}
This subsection presents the proof of Lemma~\ref{lemma:bounds_on_E1}, thus completing the proof of Lemma~\ref{lemma:negative_drift}. 
In the proof of Lemma~\ref{lemma:bounds_on_E1}, we need two auxiliary results: Propositions~\ref{prop:stopping_time} and \ref{prop:main_lemma}. We prove theses propositions in Section~\ref{sec:pf_st} and \ref{sec:pr_ber}.

The following proposition states that $Q_{\max}(t)$ is bounded, and $\bW(t)$ changes at most $n$ times on $\mathcal{E}_1$.
\begin{proposition}\label{prop:stopping_time}
For any initial state $\bx=(\ba,\bW,\bQ)\in \Omega$ with large enough $Q_{\max}$, given that event $\mathcal E_1$ occurs, $\bW(t)$ changes at most $n$ times during $[0,\tau(\bx)]$ and 
\begin{equation}
(1-c) ~\leq~ \frac{Q_{\max}(t)}{Q_{\max}} ~\leq~ (1+c), \qquad\mbox{for all}~t\in [0,\tau(\bx)],\label{eq:bound_of_Q_max}
\end{equation}
where $c$ is the constant in condition {\bf C6} of Theorem~\ref{thm:mainResult}. 
\end{proposition}
\noindent
The proof of the above proposition is given in Section~\ref{sec:pf_st}. Let $T_m$ be the time at which the $m$-th change of weight vector $\bW(t)$ occurs, i.e.,
$\bW(t)$ remains fixed during the time interval $[T_m, T_{m+1})$. Formally, let $T_0=0$ and for $m\geq 1$, iteratively define
\begin{equation*}
  T_m\,:=\,\inf\{t\in\mathbb Z_+\,:\, \bW(t-1)\neq \bW(t),\; t>T_{m-1}\}.
\end{equation*}
Since
$\bW(t)$ remains fixed for $t\in[T_{m},T_{m+1})$, condition {\bf C0} implies that
that with high probability,
$\bsigma(t)$ is close to the max-weight schedule with respect to $\bW(t)$ for 
$T_m+h\leq t\leq T_{m+1}.$ Using this observation with Proposition~\ref{prop:stopping_time}, we obtain the following proposition, which states that with high probability, 
schedule $\bsigma(t)$ is close to a max-weight schedule with respect to $f(\bQ(t))$ `mostly' in time interval $[0,\tau(\bx))]$, on the event $\mathcal{E}_1$.
\begin{proposition}\label{prop:main_lemma}
For any $\eta,\alpha,\beta \in(0,1)$ and initial state $\bx=(\ba,\bW,\bQ)\in\Omega$ with
large enough $Q_{\max}$, it follows that
\begin{equation*}
  \pr_\bx\big[\left|T(\bx,\eta)\right|\geq\left(1-\alpha\right)\,\tau(\bx)\;|\;\mathcal{E}_1\big]~\geq~1-\beta,
\end{equation*}
where
\begin{equation}\label{eq:definition_T}
    T(\bx, \eta):=\bigg\{ t \in [0,\tau(\bx)]\,:\,
    \bsigma(t)\cdot f(\bQ(t))\geq (1-\eta)\,\max_{\brho\in S}\big\{\brho\cdot f(\bQ(t))\big\}\bigg\}.
  \end{equation}
\end{proposition}
\noindent
The proof of the above proposition is given in Section~\ref{sec:pr_ber}. In the remainder of this section, we derive \eqref{eq:bound_delta_E1} and \eqref{eq:bound_delta_E1c}
utilizing Propositions~\ref{prop:stopping_time} and \ref{prop:main_lemma}. 

First, from \eqref{eq:bound_of_Q_max}, we have the following upper bound for any summand in $\Delta(\bx)$: 
for all $t\in [0,\tau(\bx)]$,
\begin{eqnarray} 
   (1-\varepsilon)\,\max_{\brho\in S}\big\{\brho\cdot f(\bQ(t)\big\}-\bsigma(t)\cdot f(\bQ(t)) 
   &\leq& \max_{\brho\in S}\big\{\brho\cdot f(\bQ(t)\big\} \nonumber \\
   &\leq& n\,f(Q_{\max}(t)) \nonumber \\[4pt]
   &\leq& n\,f((1+c)Q_{\max}).\label{eq:general_bound}
\end{eqnarray}
Furthermore, we have a tighter bound for $t\in T(\bx,\eta)$. 
From the definition \eqref{eq:definition_T} of $T(\bx,\eta)$ with $\eta=\varepsilon/2$, we obtain that
for all $t\in T(\bx,\eta)$,
\begin{eqnarray}
  (1-\varepsilon)\,\max_{\brho\in S}\big\{\brho\cdot f(\bQ(t)\big\}-\bsigma(t)\cdot f(\bQ(t)) 
   &\leq& -\frac{\varepsilon}{2}\,\max_{\brho\in S}\big\{\brho\cdot f(\bQ(t))\big\} \nonumber\\
   &\leq& -\frac{\varepsilon}{2}\,f(Q_{\max}(t)) \nonumber \\
   &\leq& -\frac{\varepsilon}{2}\,f((1-c)Q_{\max}),\label{eq:special_bound}
\end{eqnarray}
where the last inequality comes from \,\eqref{eq:bound_of_Q_max}. 
Now, under $\eta=\varepsilon/2$ in Proposition~\ref{prop:main_lemma}, define the following event
\begin{eqnarray*}
  \mathcal{E}_2&:=&\left\{\,\left| \Big\{ t \in [0,\tau(\bx)]\,:\,
    \bsigma(t)\cdot f(\bQ(t))\geq (1-\varepsilon/2)\,\max_{\brho\in S}\big\{\brho\cdot f(\bQ(t))\big\}\Big\}  \right|\geq (1-\alpha)\tau(\bx)\,\right\}.    
\end{eqnarray*}
Then, according to Proposition~\ref{prop:main_lemma}, we have $\pr_\bx[\mathcal{E}_2\,|\,\mathcal{E}_1]\geq 1-\beta$ for $\bx=(\ba,\bW,\bQ)\in\Omega$ with large enough $Q_{\max}$. From upper bounds \eqref{eq:general_bound} and \eqref{eq:special_bound}, and the definition of the event $\mathcal{E}_2$, we conclude that 
\begin{eqnarray}
  \E_\bx[\Delta(\bx)\,|\,\mathcal{E}_1\cap \mathcal{E}_2] 
  &\leq& \E_\bx\!\!\left[ -\frac{\varepsilon}{2}\,f((1-c)Q_{\max})T(\bx,\eta)+  n\,f((1+c)Q_{\max})(\tau(\bx)-T(\bx,\eta))\,\bigg|\, \mathcal{E}_1\cap\mathcal{E}_2 \right] \nonumber \\
  &\leq& \left(-\frac{\varepsilon}{2}(1-\alpha)f((1-c)Q_{\max}) +\alpha\, n\,f((1+c)Q_{\max})\right)\tau(\bx).\label{eq:bound_E2}
\end{eqnarray}
From \eqref{eq:general_bound}, we also have
\begin{equation}
  \E_\bx[\Delta(\bx)\,|\,\mathcal{E}_2^c\cap\mathcal{E}_1]
  ~\leq~ \E_\bx\!\!\left[ \sum_{t=0}^{\tau(\bx)-1} n\,f((1+c)Q_{\max})\,\bigg|\,\mathcal{E}_2^c\cap\mathcal{E}_1 \right]
  ~=~\left( \,n\,f((1+c)Q_{\max}) \right)\tau(\bx).\label{eq:bound_E2c}
\end{equation}
\noindent
Using \eqref{eq:bound_E2} and \eqref{eq:bound_E2c}, we derive  \eqref{eq:bound_delta_E1} as follows:
\begin{eqnarray*}
\E_\bx[\Delta(\bx)\,|\,\mathcal{E}_1]
&=& 
\pr_\bx[\mathcal{E}_2\,|\,\mathcal{E}_1]\ \E_\bx[\Delta(\bx)\,|\,\mathcal{E}_1\cap \mathcal{E}_2] 
~+~ \pr_\bx[\mathcal{E}_2^c\,|\,\mathcal{E}_1]\ \E_\bx[\Delta(\bx)\,|\,\mathcal{E}_1\cap \mathcal{E}_2^c]\\[5pt]
  &\leq& 
    (1-\beta)\left(-\frac{\varepsilon}{2}(1-\alpha)f((1-c)Q_{\max}) +\alpha\, n\,f((1+c)Q_{\max})\right)\tau(\bx)\\[5pt]
    && \mbox{} +\;\beta \left( n\,f((1+c)Q_{\max}) \right)\tau(\bx) \\[5pt]
  &\leq& \left( -\frac{\varepsilon}{2}(1-\alpha)(1-\beta)+\frac{2n}{1-c}\big((1-\beta)\alpha+\beta\big) \right)f((1-c)Q_{\max})\tau(\bx),
\end{eqnarray*}
where, in the last inequality, we use the following property for concave functions $f$ with $f(0)=0$:
\begin{equation*}
  \frac{f((1+c)x)}{f((1-c)x)} \leq \frac{2}{1-c} \frac{f((1+c)x)}{f(2x)} \leq \frac{2}{1-c}.
\end{equation*}

Next, for proving 
\eqref{eq:bound_delta_E1c}, 
we introduce the following Chebyshev-type inequality involving conditional expectations in \cite{MR69}:
\begin{lemma}[\protect{\cite[Theorem 2.1]{MR69}}]\label{thm:Cheby}
If $X$ is a random variable with mean $\lambda$ and variance $\mu^2$ then we have
\begin{equation*}
    \left(\E[X|\mathcal{A}]-\lambda\right)^2~\leq~\mu^2\frac{1-p}{p},
\end{equation*}
for any event $\mathcal{A}$ with $\pr[\mathcal{A}]=p$.
\end{lemma}
\noindent
From conditions {\bf C1}, {\bf C2} and {\bf C4}, we have
\begin{eqnarray}
  \max_{\brho\in\mathcal S}\brho\cdot f(\bQ(t)) ~\leq~ n f(Q_{\max}(t)) 
  &\leq& n f(Q_{\max}+A_{\max}(1)+\dots+A_{\max}(\tau(\bx))) \nonumber \\
  &\leq& n f(Q_{\max})+nA_{\max}(1)+\dots+nA_{\max}(\tau(\bx)),\label{eq:outside_bound}
\end{eqnarray}
for $\bx\in\Omega$ with large enough $Q_{\max}$. 
Since $\E[A_{\max}(1)+\dots+A_{\max}(\tau(\bx))]\leq (n+1)\tau(\bx)$ and $\var[A_{\max}(1)+\dots+A_{\max}(\tau(\bx))]\leq n\,\mu^2\,\tau(\bx)$, we have
\begin{equation}
  \E\left[A_{\max}(1)+\dots+A_{\max}(\tau(\bx))\,|\, \mathcal{E}_1^c \right] ~\leq~ (n+1)\tau(\bx) + \mu\sqrt{n}\sqrt{\tau(\bx)}\sqrt{ \frac{1-\pr[\mathcal{E}_1^c]}{\pr[\mathcal{E}_1^c]}} \label{eq:chebyshev}
\end{equation}
from Lemma~\ref{thm:Cheby}. Also, according to the definition of event $\mathcal{E}_1$, we have
\begin{equation}
  \pr_\bx[\mathcal E_1^c]~\leq~ \frac{1}{\tau(\bx)}, \label{eq:event_E_1}
\end{equation}
the proof of which is elementary and given in Appendix~\ref{app:event_E_1} for completeness.
Combining 
\eqref{eq:outside_bound}, \eqref{eq:chebyshev}, and \eqref{eq:event_E_1}, we derive \eqref{eq:bound_delta_E1c} as follows:
\begin{eqnarray*}
  \pr_\bx[\mathcal{E}_1^c]\;\E_\bx[\Delta(\bx)\,|\,\mathcal{E}_1^c]
  &\leq& \pr_\bx[\mathcal{E}_1^c]\;\E_\bx\!\left[ \sum_{t=0}^{\tau(\bx)-1}\max_{\brho\in\mathcal S}\brho\cdot f(\bQ(t))\,\bigg|\,\mathcal E^c \right] \\
  &\leq& \pr_\bx[\mathcal{E}_1^c]\sum_{t=0}^{\tau(\bx)-1}\left( n\,f(Q_{\max})+n\,\E_\bx[A_{\max}(1)+\dots+A_{\max}(\tau(\bx))\,|\,\mathcal{E}^c ] \right)\\[5pt]
	&\leq& n\,f(Q_{\max}) +n\,\tau(\bx)\,\pr_\bx[\mathcal{E}_1^c]\,\E_\bx[A_{\max}(1)+\dots+A_{\max}(\tau(\bx))\,|\,\mathcal{E}^c ]  \\[5pt]
  &\leq& n\,f(Q_{\max}) +n\,\tau(\bx)\,\pr_\bx[\mathcal{E}_1^c]\left(  (n+1)\tau(\bx) + \mu\sqrt{n}\sqrt{\tau(\bx)}\sqrt{ \frac{1-\pr[\mathcal{E}_1^c]}{\pr[\mathcal{E}_1^c]}}\right)  \\[5pt]
  &\leq& n\,f(Q_{\max})+n(n+1)\,\tau(\bx) +n\mu\sqrt{n}\,\tau(\bx)\sqrt{\tau(\bx)}\sqrt{\pr[\mathcal{E}_1^c]}\\
  &\leq& n\,f(Q_{\max})+n(n+1)\,\tau(\bx) +n\mu\sqrt{n}\,\tau(\bx).
\end{eqnarray*}
This completes the proof of Lemma \ref{lemma:bounds_on_E1}.



\subsection{Proof of Proposition~\ref{prop:stopping_time}}\label{sec:pf_st}
This subsection presents the proof of Proposition~\ref{prop:stopping_time}. We assume that event $\mathcal E_1$ occurs throughout this section. 
We first note that, for all $i\in\mathcal I$ and $t\in[1,\tau(\bx)]$,
  \begin{align*}
  &Q_i(t)-Q_i(0)~\leq~A_{max}(0)+\dots+A_{max}(\tau(\bx)-1)~\leq~ (n+\mu\sqrt{n}+1)\tau(\bx),\\[0.05in]
  &Q_i(t)-Q_i(t-1)~\geq~-1,\\
  &\tau(\bx)~\leq~ \frac{c}{n+\mu\sqrt{n}+1}\,Q_{\max},
\end{align*}
where the right hand side of the last inequality is the second term of the minimum in the definition of $\tau(\bx)$. Then, we obtain
\begin{equation*}
  -c\,Q_{\max}~\leq~\tau(\bx)~\leq~ Q_i(t)-Q_i(0)~\leq~(n+\mu\sqrt{n}+1)\tau(\bx)~\leq~ c\,Q_{\max},\qquad\mbox{for all}~ t\in[0,\tau(\bx)],
\end{equation*}
which implies that \eqref{eq:bound_of_Q_max} in Proposition~\ref{prop:stopping_time} holds.

Now, we prove that for $\bx=(\ba,\bW,\bQ)$ with large enough $Q_{\max}$, $T_{n+1}>\tau(\bx)$, i.e., $\bW(t)$ changes at most $n$ times during $[0,\tau(\bx)]$. Toward this,
we claim that we need only to show that given initial state $\bX(0)=\bx=(\ba,\bW,\bQ)$ with large enough $Q_{\max}$, the following holds: 
\begin{equation}
  |U_i(t+1)-U_i(t)|~\leq~ f'(f^{-1}(g((1-c)\;Q_{\max}) ))\cdot A_{\max}(t),\qquad\mbox{for all}~ t\in[0,\tau(\bx)].\label{eq:bound_u2}
\end{equation}
Under assuming \eqref{eq:bound_u2}, one can obtain that for all $t\in[0,\tau(\bx)]$,
\begin{align*}
  |U_i(t)-U_i(0)| 
&~\leq~ f'(f^{-1}(g((1-c)\;Q_{\max}) )) \left( A_{\max}(1)+\dots+A_{\max}(\tau(\bx))\right)\\
&~\leq~ f'(f^{-1}(g((1-c)\;Q_{\max}) )) \left(\left(n+\mu\sqrt{n}+1\right) \tau(\bx)\right)~\leq~1,
\end{align*}
where the second inequality is from the definition of event $\mathcal{E}_1$ and the last inequality from 
the definition of $\tau(\bx)$. In other words, $U_i(t)$ varies by at most $1$ for all $i\in \mathcal{I}$ during $[0,\tau(\bx)]$. Then, since $W_i(t)$ is updated only if $U_i(t)$ varies by at least $2$, $W_i(t)$ changes at most once, and $\bW(t)$ changes at most $n$ times, which implies $T_{n+1}>\tau(\bx)$.
To verify \eqref{eq:bound_u2}, we investigate the variation of $U_i(t)$ by the following cases:
\begin{enumerate}
  \item[1.] Suppose that $f(Q_i(t))>g(Q_{\max}(t))$. Because $U_i(t)=f(Q_i(t))$ (from the definition of $U_i$), 
  $Q_i(t)>f^{-1}(g(Q_{\max}(t)))$ (from the previous assumption), $f'$ is decreasing (from condition {\bf C1}), and $Q_{\max}(t)\geq (1-c)Q_{\max}$ (from \eqref{eq:bound_of_Q_max}), we obtain an upper bound
  of $f'(Q_i(t))$ as:
  \begin{equation*}
    f^\prime(Q_i(t)) ~\leq~ f'(f^{-1}(g(Q_{\max}(t)))) ~\leq~ f'(f^{-1}(g((1-c)\;Q_{\max}) )).
  \end{equation*}
  
  \item[2.] Now, suppose that $f(Q_i(t))\leq g(Q_{\max}(t))$. Because $g'(x)<f'(x)$ for large enough $x$ (from condition {\bf C2}), $Q_{\max}(t)\geq (1-c)Q_{\max}$ (from \eqref{eq:bound_of_Q_max}), and $f'$ is decreasing (from condition {\bf C1}), we obtain an upper bound of $g'(Q_{\max}(t))$ as:
  \begin{equation*}
    g'(Q_{\max}(t)) ~\leq~ f'(Q_{\max}(t)) 
    ~\leq~  f'((1-c)\,Q_{\max}) 
    ~\leq~ f'(f^{-1}(g((1-c)\;Q_{\max}) ))
  \end{equation*}
  for large enough $Q_{\max}$. 
\end{enumerate}
Then, \eqref{eq:bound_u2} follows from the definitions of $U_i$ and $A_{\max}(t)$ and the above upper bounds of $f'(Q_i(t))$ and $g'(Q_{\max}(t))$.
This completes the proof of Proposition \ref{prop:stopping_time}.

\subsection{Proof of Proposition~\ref{prop:main_lemma}}\label{sec:pr_ber} 
This subsection presents the proof of Proposition~\ref{prop:main_lemma}.
Without loss of generality, assume that $\eta\leq \frac{7}{8}$ and let 
\begin{align*}
\eta'=1-(1-\eta)^{1/3}\leq \frac{1}{2},\quad
\alpha'=1-\sqrt{1-\alpha},\quad
\gamma=\frac{\beta}{n+1},\quad\textrm{and}\quad
\delta=\alpha'\gamma.
\end{align*} 
To simplify notation, we use $h(x)$ instead of $h(x,\eta',\delta)$. 
From conditions {\bf C1}-{\bf C6}, for large enough $x$, we have 
\begin{eqnarray}
  \frac{2n}{f\left((1-c)x\right)} \leq \eta', \label{eq:condition_1}\\
  \frac{n\,g((1+c)x)+2n}{\frac{1}{2}(f((1-c)x)-2)} \leq \eta', \label{eq:condition_2}\\
  \frac{(n+1)\;h(f((1+c)x+2))}{cx} \leq \alpha', \label{eq:condition_3}\\
  (n+1)\;f'(f^{-1}(g((1-c)x)))\;h(f((1+c)x+2)) \leq \alpha', \label{eq:condition_4}
\end{eqnarray}
where their detailed proofs are given in Appendix \ref{app:condition}.

In the rest of this section, we assume that $\mathcal{E}_1$ occurs.
Then, we claim that the following conditions are sufficient to prove Proposition~\ref{prop:main_lemma}: 
\begin{enumerate}
  \item[(a)] For all $t\in[0,\tau(\bx)]$,\begin{equation*}
    (1-\eta') \left[\max_{\brho\in S} \brho \cdot f(\bQ(t))\right] \leq \max_{\brho\in S} \brho \cdot \bW(t).
  \end{equation*}
  \item[(b)] With probability at least $1-\beta$, at least $(1-\alpha)\tau(\bx)$ number of time instance $t \in [0,\tau(\bx)]$ satisfy
  \begin{equation}
    (1-\eta')\left[\max_{\brho\in S}\brho\cdot\bW(t)\right]\leq \bsigma(t)\cdot\bW(t).\label{eq:claimb}
  \end{equation}
  \item[(c)] For all $t\in [0,\tau(\bx)]$ at which \eqref{eq:claimb} is satisfied,
  \begin{equation*}
    (1-\eta')\left(\bsigma(t)\cdot\bW(t)\right)\leq \bsigma(t)\cdot f(\bQ(t)).
  \end{equation*}
\end{enumerate}
The proof of Proposition~\ref{prop:main_lemma} comes immediately from (a), (b) and (c):
\begin{eqnarray*}
  (1-\eta)\left[\max_{\brho\in S}\brho\cdot f(\bQ(t))\right]
  &=&(1-\eta')^3\left[\max_{\brho\in S} \brho \cdot f(\bQ(t))\right] \\
  &\leq&  (1-\eta')^2\left[\max_{\brho\in S} \brho \cdot \bW(t) \right]\\
  &\leq& (1-\eta') \left(\bsigma(t)\cdot\bW(t)\right) \\
  &\leq& \bsigma(t)\cdot f(\bQ(t)),
\end{eqnarray*}
where 
with probability at least $1-\beta$, at least $(1-\alpha)\tau(\bx)$ number of time instance $t \in [0,\tau(\bx)]$ satisfy
the second last and last inequalities. 
Hence, we proceed toward proving (a), (b) and (c).

\smallskip
\noindent {\bf Proof of (a).}
Recall that our scheduling algorithm in Section \ref{sec:algorithm_description}
maintains $|U_i(t)-W_i(t)|\leq 2$, where $U_i(t)=\max\{f(Q_i(t)), g(Q_{\max}(t))\}$. 
Thus, for $t\in\mathbb Z_+$ and $i\in \mathcal{I}$, we have $f(Q_i(t))-2 ~\leq~ W_i(t)$, and hence
\begin{equation}
  \max_{\brho\in S} \brho\cdot f(\bQ(t))-2n ~\leq~ \max_{\brho\in S} \brho \cdot \bW(t).\label{eq:rel_f_W}
\end{equation}
In addition, from Proposition~\ref{prop:stopping_time}, we have
\begin{equation}
  f((1-c)Q_{\max})~\leq~f(Q_{\max}(t))~\leq~\max_{\brho\in S} \brho\cdot f(\bQ(t)).\label{eq:f_max_lower}
\end{equation}
Therefore, we conclude that, for large enough $Q_{\max}$, 

\begin{eqnarray*}
  (1-\eta') \left[\max_{\brho\in S} \brho \cdot f(\bQ(t))\right]
  &\leq& \left(1-\frac{2n}{f((1-c)Q_{\max})}\right) \left[\max_{\brho\in S} \brho \cdot f(\bQ(t))\right]\\
  &=& \max_{\brho\in S} \brho\cdot f(\bQ(t))-2n\left( \frac{\max_{\brho\in S} \brho \cdot f(\bQ(t))}{f((1-c)Q_{\max})} \right)\\
  &\leq& \max_{\brho\in S} \brho\cdot f(\bQ(t))-2n\\
  &\leq& \max_{\brho\in S} \brho \cdot \bW(t),
\end{eqnarray*}
where the first inequality comes from \eqref{eq:condition_1}, the second inequality from \eqref{eq:f_max_lower}, and the last inequality from \eqref{eq:rel_f_W}.
  


\smallskip
\noindent {\bf Proof of (b).}
Recall that $T_m$ is the time at which the $m$-th change of weight vector $\bW(t)$. For $t\in[T_m,T_{m+1})$, let $\bW:=\bW(t)$ and
define a binary random variable $Z_t\in\{0,1\}$ by
\begin{equation*}
  Z_t~:=~\begin{cases}
    1 & \quad\textrm{if $\left( \bsigma_{\oracle}(\ba_{\oracle}^{(t)}(\ba))\right)
      \cdot \bW
       ~<~(1-\eta') \max_{\brho\in S}
      \brho\cdot \bW
  $} \\
  0 & \quad\textrm{otherwise}
  \end{cases}.
\end{equation*}
Then, from condition {\bf C0}, for $t\in[T_m+h',T_{m+1})$, we have $\E[Z_t]< \delta$ and $\E\left[\sum_{t=h'}^{l-1} Z_t\right]< \delta(l-h')$, where $h'\geq h(W_{\max},\eta',\delta)$ and $l> h'$.
Applying the Markov inequality to the random variable $\sum_{t=h}^{l-1} Z_t$, we conclude that 
\begin{align*}
  \bigg|\big\{t\in [h,l)~:~ \left( \bsigma_{\oracle}(\ba_{\oracle}^{(t)}(\ba))\right)
      \cdot \bW
       \geq(1-\eta) \max_{\brho\in S}
      \brho\cdot \bW\big\}\bigg|~ \geq~ (1-\delta/\gamma) (l-h)
\end{align*}
occurs with probability at least $1-\gamma$.
In other words, with probability $\geq 1-\gamma$, 
\begin{align*}
  \bigg|\big\{t\in[T_i+h,T_{i+1}):\bsigma(t)\cdot\bW(t)
  \geq(1-\eta')\max_{\brho\in S}\brho\cdot\bW(t)\big\}\bigg|  \geq (1-\alpha')(T_{i+1}-T_i-h),
\end{align*}
where from Proposition~\ref{prop:stopping_time}, 
we set 
$$h~=~h((1+c)Q_{\max}+2,\eta',\delta)~\geq~h(\bW(t),\eta',\delta).$$ 
Since $\bW(t)$ changes at most $n$ times in $[0,\tau(\bx)]$ from Proposition~\ref{prop:stopping_time},
one can use the union bound and conclude that
with probability $\geq 1-\beta=1-(n+1)\gamma$, 
$$(1-\eta')\max\brho\cdot \bW(t)\leq \bsigma(t)\cdot\bW(t),$$ 
for at least $(1-\alpha')$ fraction of times in $\bigcup_{i=0}^{n}[T_i+h,T_{i+1})$. 
Furthermore, from \eqref{eq:condition_3}, \eqref{eq:condition_4}, and the definition of $\tau$, we have
\begin{eqnarray*}
  (n+1)h&=&{(n+1)h(f(1-c)Q_{\max}+2)} \\
  &\leq& \frac{\alpha'}{2}\min\left\{c\,Q_{\max},\frac{1}{f'(f^{-1}(g(1-c)Q_{\max}))}\right\}\\
  &\leq& \frac{\alpha'}{2}(\tau(\bx)+1)~\leq~\alpha'\tau(\bx).
\end{eqnarray*}
Thus, it follows that
\begin{eqnarray*}
  \left|\bigcup_{i=0}^{n}[T_i+h,T_{i+1})\right|&\geq&\tau(\bx)-(n+1)h
  \geq (1-\alpha')\tau(\bx).
\end{eqnarray*}
Therefore, with probability $\geq 1-\beta$, for at least $(1-\alpha')^2=(1-\alpha)$ fraction of times in the interval $[0,\tau(\bx)]$, 
\eqref{eq:claimb} holds.

\smallskip
\noindent {\bf Proof of (c).} 
From $|U_i(t)-W_i(t)|\leq 2$, where $U_i(t)=\max\{f(Q_i(t)), g(Q_{\max}(t))\}$,  we have
\begin{equation}
  f(Q_i(t))-2 ~\leq~ W_i(t) ~\leq~ f(Q_i(t))+g(Q_{\max}(t))+2, \label{eq:rel_f_W_both}
\end{equation}
Then, for all $t\in [0,\tau(\bx)]$ at which \eqref{eq:claimb} is satisfied, we obtain
\begin{eqnarray}
  \frac{1}{2}(f((1-c)Q_{\max})-2) &\leq& \frac{1}{2} (f(Q_{\max}(t))-2)~\leq~ \frac{1}{2} \left[ \max_{\brho\in S} \brho\cdot \bW(t)\right] \nonumber \\
  &\leq& \eta'\left[ \max_{\brho\in S} \brho\cdot \bW(t)\right]~\leq~ \bsigma(t)\cdot\bW(t). \label{eq:claimc}
\end{eqnarray}
where the first inequality comes from $(1-c)Q_{\max}\leq Q_{\max}(t)$ in Proposition~\ref{prop:stopping_time}
, the second inequality from \eqref{eq:rel_f_W_both}, the third inequality from the assumption $\eta'\leq 1/2$, and the last inequality from \eqref{eq:claimb}. We also have
\begin{eqnarray}   
  \bsigma(t)\cdot \bW(t) &\leq&  \bsigma(t)\cdot f(\bQ(t))+n\,g(Q_{\max}(t))+2n \nonumber \\
  &\leq& \bsigma(t)\cdot f(\bQ(t))+n\,g((1+c)Q_{\max})+2n, \label{eq:f_W_upper}
\end{eqnarray}
where the first inequality follows from \eqref{eq:rel_f_W_both} and the second inequality from $Q_{\max}(t)\leq (1+c)Q_{\max}$ in Proposition~\ref{prop:stopping_time}. The last two inequalities with \eqref{eq:condition_2} lead to (c): 
for large enough $Q_{\max}$, we have
\begin{eqnarray*}
  (1-\eta') \bsigma(t)\cdot\bW(t)
  &\leq& \left(1-\frac{n\,g((1+c)Q_{\max})+2n}{\frac{1}{2}(f((1-c)Q_{\max})-2)}\right)\bsigma(t)\cdot\bW(t) \\
  &\leq& \bsigma(t)\cdot\bW(t)-n\,g((1+c)Q_{\max})+2n\\
  &\leq& \bsigma(t)\cdot f(\bQ(t)),
\end{eqnarray*}
where the first inequality comes from \eqref{eq:condition_2}, the second inequality from \eqref{eq:claimc}, and the last inequality from \eqref{eq:f_W_upper}.
This completes the proof of Proposition~\ref{prop:main_lemma}.

\section{Conclusion}

The problem of dynamic resource allocation among network users contending resources 
has long been the subject of significant research in the last four decades. In this paper, we develop  a generic framework
for designing resource allocation algorithms of low-complexity and high-performance via
connecting iterative optimization methods and scheduling algorithms. 
Our work establishes sufficient conditions on queue-length functions
so that a queue-based scheduling algorithm is throughput-optimal.
To our best knowledge, our result is the first that establishes a rigorous connection between iterative optimization methods and 
low-complexity scheduling algorithms. We believe that it is of broader interest to design
low-complexity scheduling algorithms with high performance in various domains.

\bibliographystyle{plain}



\newpage
\appendix 
\section{Proof of \eqref{eq:tau_infinity} and \eqref{eq:kappa_infinity}}\label{app:infty}
This section verifies two equations
\begin{align*}
  \lim_{L(\bx)\to\infty} \tau(\bx)&~=~\infty  \tag{\ref{eq:tau_infinity} Revisited}\\
  \lim_{L(\bx)\to\infty} \kappa(\bx)/\tau(\bx)&~=~\infty, \tag{\ref{eq:kappa_infinity} Revisited}
\end{align*}
where, for $\bx=(\ba,\bW,\bQ)\in\Omega$, $L(\bx)=\sum_{i\in \mathcal{I}}\int_0^{Q_i} f(s)d s$,
\begin{align*}
   \tau(\bx)&~=~\left\lfloor \frac{1}{(n+s\sqrt{n})+1}\,\min\left\{ \frac{1}{f'\left(f^{-1}(g((1-c)Q_{\max}) ))\right)},~ c\, Q_{\max}  \right\} \right\rfloor,  \tag{\ref{eq:deftau} Revisited}\\
   \frac{\kappa(\bx)}{\tau(\bx)}
	&~=~ \left( \frac{\varepsilon}{2}(1-\alpha)(1-\beta)+\frac{2n}{1-c}\big((1-\beta)\alpha+\beta\big) \right)f((1-c)Q_{\max}) \\
	&~~\mbox{}~- n\left( \frac{f(Q_{\max})}{\tau(\bx)} +(\mu^2+2)f'(0)+n+\mu\sqrt{n}+1 \right), \tag{\ref{eq:defkappa} Revisited}
\end{align*}
and $\alpha$, $\beta$ are constants that satisfy
\begin{equation}
  \frac{\varepsilon}{2}(1-\beta)(1-\alpha)-\frac{2n(\beta+(1-\beta)\alpha)}{1-c}>0. \label{eq:alpha_beta}
\end{equation}
We first note that $L(\bx)\to\infty$ if and only if $Q_{\max}\to\infty$ from the definition of $L$. To show that \eqref{eq:tau_infinity}, we calculate the limit of $\tau(\bx)$ in cases:
\begin{enumerate}
	\item[(i)] If $c\,Q_{\max}\leq \frac{1}{f'\left(f^{-1}(g((1-c)Q_{\max}) ))\right)}$, we have
	\begin{equation*}
		\lim_{L(\bx)\to\infty} \left\lfloor \frac{1}{(n+s\sqrt{n})+1}c\,Q_{\max} \right\rfloor
		~\geq~\lim_{Q_{\max}\to\infty} \frac{1}{(n+s\sqrt{n})+1}c\,Q_{\max}-1~=~\infty,
	\end{equation*}
	\item[(ii)] If $c\,Q_{\max}>\frac{1}{f'\left(f^{-1}(g((1-c)Q_{\max}) ))\right)}$, we have
	\begin{eqnarray*}
		\lefteqn{\lim_{L(\bx)\to\infty} \left\lfloor \frac{1}{(n+s\sqrt{n})+1}\frac{1}{f'\left(f^{-1}(g((1-c)Q_{\max}) ))\right)}\right\rfloor}\\
		&\geq& \lim_{Q_{\max}\to\infty} \frac{1}{(n+s\sqrt{n})+1}\frac{1}{f'\left(f^{-1}(g((1-c)Q_{\max}) ))\right)}-1~=~\infty
	\end{eqnarray*}
	since  $\lim_{x\to\infty} f(x)=g(x)=\infty$ and $\lim_{x\to\infty} f'(x)=0$ (conditions {\bf C1}, {\bf C2}, and {\bf C4}).
\end{enumerate}
Therefore, we have $\lim_{L(\bx)\to\infty} \tau(\bx)=\infty$.

\smallskip
To prove \eqref{eq:kappa_infinity}, note that the following property for concave function $f$ with $f(0)=0$:
\begin{equation*}
	f((1-c)x)~\geq~(1-c)f(x)+cf(0)~=~(1-c)f(x).
\end{equation*}
Then, we have
\begin{eqnarray*}
	\lim_{L(\bx)\to\infty} \frac{\kappa(\bx)}{\tau(\bx)}
	&=&  \lim_{Q_{\max}\to\infty} \left( \frac{\varepsilon}{2}(1-\alpha)(1-\beta)+\frac{2n}{1-c}\big((1-\beta)\alpha+\beta\big) \right)f((1-c)Q_{\max}) \\
	&&\mbox{}- \frac{n}{\tau(\bx)}f(Q_{\max})-n\left( (\mu^2+2)f'(0)+n+\mu\sqrt{n}+1 \right)\\
	&\geq& \lim_{Q_{\max}\to\infty} \left(\frac{\varepsilon}{2}(1-\alpha)(1-\beta)+\frac{2n}{1-c}\big((1-\beta)\alpha+\beta\big)-\frac{1}{1-c}\frac{n}{\tau(\bx)}\right)f((1-c)Q_{\max})\\
	&&\mbox{}-n\left( (\mu^2+2)f'(0)+n+\mu\sqrt{n}+1 \right)~=~\infty,
\end{eqnarray*}
due to \eqref{eq:alpha_beta}, \eqref{eq:tau_infinity}, and $\lim_{x\to\infty} f(x)=\infty$.

\section{Proof of \eqref{eq:event_E_1}}\label{app:event_E_1}
This section proves \eqref{eq:event_E_1}; that is, $\pr_\bx[\mathcal E_1^c]\leq \frac{1}{\tau(\bx)}$, where $\mathcal{E}_1=\{ A_{\max}(0)+\dots+A_{\max}(\tau(\bx)-1)\leq (n+\mu\sqrt{n}+1)\,\tau(\bx)\}$.
Recall that $A_{\max}(t)=\max\{1,A_1(t),\dots,A_n(t) \}$. Then, we have $\E[A_{\max}(t)]\leq \sum_{i=1}^n \lambda_i+1\leq n+1$ and $\var\left[A_{\max}(t)\right]\leq \sum_{i=1}^n \var[A_i(t)]\leq n\mu^2$ for every $t\in\mathbb Z_+$. 
From these inequalities and Chebyshev's inequality, we have 
\begin{eqnarray*}
  \pr_\bx\left[ \mathcal E_1^c \right]
  &=&\pr_\bx\left[A_{\max}(1)+\dots+A_{\max}(\tau(\bx))  \geq \left(n+\mu\sqrt{n}+1\right)\tau(\bx)\right]\\
  &=&\pr_\bx\left[A_{\max}(1)+\dots+A_{\max}(\tau(\bx))  \geq (n+1)\,\tau(\bx)+\sqrt{\tau(\bx)}\left(\mu\sqrt{n}\sqrt{\tau(\bx)}\right)\right]
  ~\leq~ \frac{1}{\tau(\bx)},
\end{eqnarray*}
which verifies \eqref{eq:event_E_1}.

\section{Proof of \eqref{eq:condition_1}--\eqref{eq:condition_4}}\label{app:condition}

Conditions {\bf C2} and {\bf C5} show that  
\begin{eqnarray*}
  \lim_{x\to\infty}  \frac{2n}{f\left((1-c)x\right)} =0,\\
  \lim_{x\to\infty}   \frac{h(f((1+c)x+2))}{x} =0,
\end{eqnarray*}
which implies \eqref{eq:condition_1} and \eqref{eq:condition_3}. Now, from conditions {\bf C1} and {\bf C2}, we obtain
\begin{equation*}
  \lim_{x\to\infty} \frac{g((1+c)x)}{f((1-c)x)} \leq \lim_{x\to\infty} \frac{2}{1-c}\frac{g((1+c)x)}{f(1+c)x}= 0,
\end{equation*}
where we use the following property for concave function $f$:
\begin{equation*}
  \frac{f((1+c)x)}{f((1-c)x)} \leq \frac{1}{1-c} \frac{f((1+c)x)}{f(2x)} \leq \frac{2}{1-c}.
\end{equation*}
Then, \eqref{eq:condition_2} holds.
Finally, for \eqref{eq:condition_4}, we let $x'=x+2/(1+c)>x$, and we observe that
\begin{eqnarray*}
  \lefteqn{f'(f^{-1}(g((1-c)x)))\;h(f((1+c)x+2))}\\
  &=& f'(f^{-1}(g((1-c)x)))\;h(f((1+c)x')) \\
  &=& \frac{f'(f^{-1}(g((1-c)x)))}{f'(f^{-1}(g((1-c)x')))} \\
  &&\mbox{}  \times f'(f^{-1}(g((1-c)x')))\;h(f((1+c)x')) \\
  &\leq& f'(f^{-1}(g((1-c)x')))\;h(f((1+c)x'))\to 0,
\end{eqnarray*}
as $x\to\infty$ from condition {\bf C6}. This completes the proof of \eqref{eq:condition_4}.

\end{document}